\newtheorem{theorem}{Theorem}
\newtheorem{lemma}{Lemma}
\newtheorem{remark}{Remark}
\newtheorem{definition}{Definition}
\newtheorem{corollary}{Corollary}
\newtheorem{proposition}{Proposition}
\newtheorem{question}{Question}
\newtheorem{example}{Example}
\newcommand{\EE}{\mathbb{E}}
\newcommand{\ovar}{\operatorname{Var}}
\newcommand{\id}{\operatorname{id}}
\newcommand{\orthplus}{\oplus}
\begin{document}

\title{Rigid characterizations of probability measures through independence, with applications
}

\date{March 10, 2024}
\author{Thomas~A.~Courtade\\University of California, Berkeley}

\maketitle

\begin{abstract}
Three equivalent  characterizations of probability measures through  independence criteria are given.  These characterizations lead to   a family of Brascamp--Lieb-type inequalities for relative entropy, determine equilibrium states and sharp rates of convergence for certain linear Boltzmann-type dynamics, and unify an assortment of   $L^2$ inequalities in probability.    
\end{abstract}

\section{Introduction}
We start with the notation and definitions needed to state our main result.   Throughout, we work on the space $\mathbb{R}^n$, and denote the set of $n\times n$ real matrices by $M_{n\times n}(\mathbb{R})$.  Both spaces are equipped with their usual Euclidean topologies, and we let $\|\cdot\|$ denote the Euclidean norm on each space.    For a linear subspace $E\subset \mathbb{R}^n$, we let $P_E : \mathbb{R}^n \to \mathbb{R}^n$ denote the orthogonal projection of $\mathbb{R}^n$ onto $E$, represented by a matrix in  $M_{n\times n}(\mathbb{R})$.  Let $\mathbf{E}$ denote the set of all linear subspaces of $\mathbb{R}^n$.  We equip $\mathbf{E}$ with the coarsest topology such that the maps $E\in \mathbf{E} \mapsto P_E\in M_{n\times n}(\mathbb{R})$ are continuous.   For a topological space $\mathsf{X}$, we let $P(\mathsf{X})$ denote the set of Borel probability measures on $\mathsf{X}$. For our purposes, $\mathsf{X}$ will be either $\mathbf{E}$, $\mathbb{R}^n$, or a linear subspace of $\mathbb{R}^n$.   

Each linear subspace $E\subset \mathbb{R}^n$ is equipped with the induced Euclidean topology.  The orthogonal complement of $E$ with respect to the usual  inner product on $\mathbb{R}^n$ is denoted $E^{\perp}$.  For $\mu \in P(\mathbb{R}^n)$, we let $\mu_E \in P(E)$ denote the marginal   of $\mu$ on $E$.  That is, for all bounded continuous $\phi: E\to \mathbb{R}$, we have
$$
\int_{E} \phi(y) d\mu_E(y) := \int_{\mathbb{R}^n} (\phi\circ \pi_E)(x) d\mu(x),
$$
where $\pi_E: \mathbb{R}^n \to E$ is the canonical projection of $\mathbb{R}^n$ onto $E$.  Generally speaking, we write $T\#\mu$ to denote the pushforward of $\mu$ by a measurable map $T$.  For example,  $\mu_E = \pi_E \# \mu$.

 We say the measure $\mu_E$ is Gaussian if $\mu_E = N(\theta,\Sigma)$ for some $\theta\in E$ and $\Sigma\equiv \operatorname{Cov}(\mu_E)$ a symmetric nonnegative form on $E$ (in particular, we allow degenerate Gaussians).  The standard Gaussian measure on $\mathbb{R}^n$ is always  denoted by $\gamma$.

\begin{definition}
A probability measure $\mu \in P(\mathbb{R}^n)$ splits along $(E,E^{\perp})$ if it admits the product form $\mu = \mu_E\otimes \mu_{E^{\perp}}$. We say that $\mu$ splits with respect to $\xi\in P(\mathbf{E})$ if $\mu$ splits along $(E,E^{\perp})$, $\xi$-a.s. 
\end{definition}

 For $\xi \in P(\mathbf{E})$, we define   $\chi_{\xi}: \mathbb{R}^n \to \mathbb{R}$ by
$$
\chi_{\xi}(x) := \int_{\mathbf{E}} \min\{ \|P_E x\|,\|P_{E^{\perp}}x\|\} d\xi(E), ~~x\in \mathbb{R}^n.
$$
The zero-set of $\chi_{\xi}$ describes an important characteristic of $\xi$, as explained below.

\begin{proposition}\label{prop:ChiZeroSet}
The set $\chi_{\xi}^{-1}(0)\subset \mathbb{R}^n$ may be uniquely  written as the  union of mutually orthogonal, distinct linear subspaces $(E_{\alpha})_{\alpha}\subset \mathbf{E}$, which we denote  
\begin{align}
\chi_{\xi}^{-1}(0)= \cup_{\alpha} E_{\alpha}. \label{eq:indepSubspaceUnion}
\end{align}
\end{proposition}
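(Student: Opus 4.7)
The plan is to observe that $\chi_{\xi}^{-1}(0) = \{x \in \mathbb{R}^n : x \in E \cup E^{\perp} \text{ for } \xi\text{-a.e. } E\}$, since the integrand is nonnegative and vanishes precisely on $E \cup E^{\perp}$. I will then define an equivalence relation on the nonzero part of this set whose classes, augmented by $\{0\}$, are exactly the desired subspaces $E_{\alpha}$. Concretely, for nonzero $x, y \in \chi_{\xi}^{-1}(0)$, declare $x \sim y$ whenever $\xi$-a.e.\ $E$ satisfies $\{x,y\} \subset E$ or $\{x,y\} \subset E^{\perp}$. Reflexivity is the characterization above, symmetry is immediate, and transitivity follows because $y \neq 0$ cannot lie simultaneously in $E$ and $E^{\perp}$, so the two dichotomies witnessing $x \sim y$ and $y \sim z$ must agree coherently for $\xi$-a.e.\ $E$. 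Each class together with $\{0\}$ is closed under linear combinations: whenever $E$ is a good set for the pair $(x,y)$, any $ax + by$ lies in whichever of $E, E^{\perp}$ contains both $x$ and $y$, so $ax+by \in \chi_{\xi}^{-1}(0)$ and $ax+by \sim x$ if nonzero.

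The heart of the argument is the geometric observation that mutual orthogonality of the classes is essentially forced: if $\langle x, y\rangle \neq 0$, then no subspace $E \in \mathbf{E}$ can place $x \in E$ and $y \in E^{\perp}$ at all, so once $\xi$-a.e.\ $E$ puts $\{x,y\}$ into $E \cup E^{\perp}$, the only option is $\{x,y\} \subset E$ or $\{x,y\} \subset E^{\perp}$, which means $x \sim y$. Contrapositively, vectors in distinct equivalence classes are orthogonal, so the collection of subspaces $E_{\alpha}$ is mutually orthogonal, hence linearly independent, hence finite (at most $n$ terms).

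For uniqueness I would characterize the $E_{\alpha}$ intrinsically as the maximal linear subspaces of $\mathbb{R}^n$ contained in $\chi_{\xi}^{-1}(0)$. If some subspace $F$ strictly contained $E_{\alpha}$ and lay in $\chi_{\xi}^{-1}(0)$, pick $x \in F \setminus E_{\alpha}$; then $x$ belongs to some other class $E_{\beta}$ and satisfies $x \perp E_{\alpha}$, so for any nonzero $y \in E_{\alpha}$ the vector $x + y \in F \subset \chi_{\xi}^{-1}(0)$ is nonzero and belongs to some $E_{\gamma}$. The inner product $\langle x+y, y\rangle = \|y\|^2 > 0$ forces $E_{\gamma}$ to be non-orthogonal to $E_{\alpha}$, hence $\gamma = \alpha$, hence $x = (x+y) - y \in E_{\alpha}$---a contradiction. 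A symmetric argument shows every maximal subspace of $\chi_{\xi}^{-1}(0)$ coincides with some $E_{\alpha}$, yielding uniqueness. The main subtlety I anticipate is the careful setup of the equivalence relation, namely excluding $0$ to get transitivity and invoking the ``forbidden configuration'' $\langle x,y\rangle \neq 0$ at precisely the right place, where it upgrades a measure-theoretic statement into a deterministic geometric one.
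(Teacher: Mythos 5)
Your proof is correct and takes essentially the same route as the paper: both define an equivalence relation on $\chi_{\xi}^{-1}(0)\setminus\{0\}$ (the paper via $\chi_{\xi}(x-y)=0$, you directly via the $\xi$-a.e.\ dichotomy ``both in $E$ or both in $E^{\perp}$,'' which the paper shows is the same relation), use the orthogonality of $E$ and $E^{\perp}$ to force distinct classes to be orthogonal, and take $E_{\alpha}$ to be a class together with the origin. Your uniqueness argument via maximal subspaces is in fact more explicit than the paper's, which simply asserts that uniqueness follows immediately.
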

\begin{remark}
 If the subspaces $(E_{\alpha})_{\alpha}$ are all nonzero, we do not distinguish between the cases $\left( \cup_{\alpha} E_{\alpha} \right)$ and $\{0\}\cup \left( \cup_{\alpha} E_{\alpha} \right)$ in the assertion of uniqueness.
 \end{remark}
 The proof  is deferred to Section \ref{sec:mainProof}.  Admitting it for now,   Proposition \ref{prop:ChiZeroSet} yields a canonical decomposition of $\mathbb{R}^n$ into orthogonal subspaces,   defined below.  
 
 \begin{definition}
 Fix  $\xi\in P(\mathbf{E})$.  The nonzero subspaces $(E_{\alpha})_{\alpha}$ appearing in the decomposition \eqref{eq:indepSubspaceUnion} are called the {\bf independent subspaces} of $\mathbb{R}^n$ with respect to $\xi$.   The {\bf independent decomposition} of $\mathbb{R}^n$ (with respect to $\xi$)  is the orthogonal decomposition 
 $$
\mathbb{R}^n = \left( \orthplus_{\alpha} E_{\alpha} \right) \orthplus E_{dep},
$$
where   $E_{dep}:=  ( \oplus_{\alpha} E_{\alpha}  )^{\perp}$ is the {\bf dependent subspace} of $\mathbb{R}^n$ with respect to $\xi$.  
 \end{definition}
 \begin{definition}\label{def:covSplits}
 Let $\mu \in P(\mathbb{R}^n)$, and let $V\subset \mathbb{R}^n$ be a linear subspace.  We say that the covariance of $\mu_V$ splits with respect to $\xi\in P(\mathbf{E})$ if 
 $$
\int_{\mathbf{E}} \| P_E \operatorname{Cov}(P_V \# \mu) P_{E^{\perp}} \| \, d\xi(E) = 0.
 $$
 \end{definition}
The important case to consider is when $\mu_V$ is Gaussian.  Then, for $X\sim \mu$, the vectors $P_E P_VX$ and $P_{E^{\perp}} P_VX$ are independent for $\xi$-a.e.~$E\in \mathbf{E}$.  

\subsection{Characterizations of measures through independence}

The following theorem provides rigid characterizations of measures that split with respect to a given $\xi\in P(\mathbf{E})$.  We defer the proof to Section \ref{sec:mainProof}. 
\begin{theorem}\label{thm:splittingwrtxi}
Fix $\xi\in P(\mathbf{E})$.  For $\mu\in P(\mathbb{R}^n)$, the following are equivalent:
\begin{enumerate}[1)]
\item    The measure $\mu$ splits with respect to  $\xi$.
\item In terms of the independent decomposition of $\mathbb{R}^n$ with respect to $\xi$, the measure $\mu$ admits the product form 
\begin{align}
\mu =  \left( \otimes_{\alpha} \mu_{E_{\alpha}} \right) \otimes \mu_{E_{dep}}, \label{eq:muProductXi}
\end{align}
where $\mu_{E_{dep}}$ is Gaussian with covariance that splits with respect to $\xi$. 
\item  Marginal $\mu_{E_{dep}}$ satisfies $\int_{E_{dep}} \!\!\! \log(1 + \|x\|) d\mu_{E_{dep}}\!(x)\!<\!\infty$, and $\mu$ is    the mixture 
\begin{align}
\mu = \int_{\mathbf{E}} (\mu_E \otimes  \mu_{E^{\perp}}) d\xi(E). \label{eq:MuIsMixture0}
\end{align}
\end{enumerate}
\end{theorem}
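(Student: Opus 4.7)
I would prove the theorem as a cycle $(2) \Rightarrow (3) \Rightarrow (1) \Rightarrow (2)$.

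The implication $(2) \Rightarrow (3)$ is essentially bookkeeping. By Proposition~\ref{prop:ChiZeroSet}, each independent subspace $E_\alpha$ lies in $\chi_\xi^{-1}(0)$, so for $\xi$-a.e.\ $E$ every point of $E_\alpha$ belongs to $E \cup E^\perp$; since $E_\alpha$ is a subspace and $E \cup E^\perp$ is the union of two subspaces, this forces $E_\alpha \subset E$ or $E_\alpha \subset E^\perp$ for $\xi$-a.e.\ $E$. Combined with the covariance splitting of the Gaussian $\mu_{E_{dep}}$, the product form in (2) factors as $\mu = \mu_E \otimes \mu_{E^\perp}$ for $\xi$-a.e.\ $E$, so the mixture formula in (3) is immediate by integration; the log-integrability condition is automatic since $\mu_{E_{dep}}$ is Gaussian.

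For the key implication $(3) \Rightarrow (1)$, my approach is through convexity of relative entropy with respect to the standard Gaussian $\gamma$. Using the identity $D(\mu_E \otimes \mu_{E^\perp}\,|\,\gamma) = D(\mu\,|\,\gamma) - I_\mu(E;E^\perp)$, where $I_\mu(E;E^\perp) := D(\mu\,|\,\mu_E \otimes \mu_{E^\perp}) \geq 0$ is mutual information, the mixture representation in (3) combined with Jensen's inequality yields
\[
D(\mu\,|\,\gamma) \leq \int D(\mu_E \otimes \mu_{E^\perp}\,|\,\gamma)\, d\xi(E) = D(\mu\,|\,\gamma) - \int I_\mu(E;E^\perp)\, d\xi(E).
\]
When $D(\mu\,|\,\gamma) < \infty$ this forces $I_\mu(E;E^\perp) = 0$ for $\xi$-a.e.\ $E$, which is exactly (1). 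For general $\mu$ I would regularize by the Ornstein--Uhlenbeck semigroup $\mu \mapsto \mu^{(s)}$, which preserves the mixture representation (since the OU kernel splits along every $(E,E^\perp)$) and has finite relative entropy to $\gamma$ for $s > 0$; the log-integrability hypothesis on $\mu_{E_{dep}}$ is precisely what is needed to guarantee this finiteness, and taking $s \to 0$ transfers the splitting back to $\mu$ by weak convergence.

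For $(1) \Rightarrow (2)$, I would first show, by maximality of the decomposition in Proposition~\ref{prop:ChiZeroSet}, that for every pair of distinct independent subspaces $E_\alpha, E_\beta$ the set $\{E : E_\alpha \subset E,\ E_\beta \subset E^\perp\}$ has positive $\xi$-measure --- otherwise $E_\alpha \oplus E_\beta$ itself would lie in $\chi_\xi^{-1}(0)$, contradicting the uniqueness clause of Proposition~\ref{prop:ChiZeroSet}. Translating (1) through characteristic functions as $\hat\mu(s+t) = \hat\mu(s)\hat\mu(t)$ for $s \in E,\ t \in E^\perp$ then gives pairwise independence of the marginals $\mu_{E_\alpha}$, and by iteration the full product structure together with independence from $\mu_{E_{dep}}$. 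Gaussianity of $\mu_{E_{dep}}$ follows by Skitovic--Darmois rigidity applied within $E_{dep}$ along the family of $E \in \mathrm{supp}(\xi)$ that cut $E_{dep}$ nontrivially, and the covariance splitting is then the translation of independence into annihilation of the off-diagonal covariance blocks. The main obstacle I anticipate is the regularization step in $(3) \Rightarrow (1)$: verifying that $D(\mu^{(s)}\,|\,\gamma) < \infty$ for $s > 0$ under only the logarithmic moment hypothesis, and that the $\xi$-a.e.\ splitting survives the weak limit $s \to 0$.
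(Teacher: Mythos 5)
Your overall architecture is sound and two of the three legs track the paper's logic, but there are two genuine gaps. First, in $(3)\Rightarrow(1)$ the obstacle you flag is real and your chosen functional does not survive it: under only a logarithmic moment hypothesis, $\mu^{(s)}$ (OU-regularized or Gaussian-convolved) can have infinite second moment, so $D(\mu^{(s)}\|\gamma)=-h(\mu^{(s)})+\tfrac12\int\|x\|^2\,d\mu^{(s)}+\tfrac n2\log(2\pi)$ may be $+\infty$ and the cancellation $D(\mu\|\gamma)\le D(\mu\|\gamma)-\int I_\mu\,d\xi$ is vacuous. The paper's version of this argument works with the Shannon entropy $h(\tilde\mu)$ of $\tilde\mu=\mu*\gamma$ rather than relative entropy to $\gamma$: boundedness of the smoothed density gives $h(\tilde\mu)>-\infty$ and the logarithmic moment gives $h(\tilde\mu)<+\infty$, and then strict concavity of $-t\log t$ applied to the mixture, together with subadditivity of entropy, forces equality and hence $\tilde\mu=\tilde\mu_E\otimes\tilde\mu_{E^\perp}$, $\xi$-a.s. (One also needs the paper's preliminary reduction from ``$\mu_{E_{dep}}$ has log moments'' to ``$\mu$ has log moments,'' achieved by contracting along $E_{ind}$.) So your scheme is repairable, but only by switching the reference functional.

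Second, and more seriously, the Gaussianity of $\mu_{E_{dep}}$ in $(1)\Rightarrow(2)$ is the rigidity core of the theorem and cannot be delegated to Skitovich--Darmois: that theorem presupposes an independent coordinate system in which the linear forms are built, which is exactly what is unavailable inside $E_{dep}$ (by construction $E_{dep}$ has no nontrivial independent subspaces). The paper instead (i) proves a tail self-majorization estimate from the splitting to get $\int\|x\|\,d\mu<\infty$ (Lemma~\ref{lem:finiteMoments}), (ii) regularizes by $\gamma$ to get a smooth density $f$ with $\|\nabla\log f(x)\|\le C(1+\|x\|)$, and (iii) shows the tempered distribution $\widehat{\nabla\log f}$ is supported at the origin by intersecting the supports forced by each splitting, so $\nabla\log f$ is affine and $f$ is Gaussian; Bernstein's theorem is then a \emph{corollary} of this, not an available input. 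A further, smaller point: your reduction of the product form to pairwise separations plus ``iteration'' does work, but it requires the marginalization induction (each bipartition independence, marginalized appropriately, factors the joint law component by component), and before any of this you must extract from the $\xi$-a.e.\ statement a finite separating family of subspaces, which the paper does by a greedy dimension-reduction argument. Your positive-measure separation claim also needs the symmetric alternative ($E_\alpha\subset E^\perp$, $E_\beta\subset E$) included, though that is cosmetic.
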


A few remarks are in order.
\begin{remark}
The marginal factors $(\mu_{E_{\alpha}})_{\alpha}$ in the decomposition \eqref{eq:muProductXi} are allowed to be arbitrary, unlike the marginal $\mu_{E_{dep}}$, which is necessarily Gaussian. 
\end{remark}
\begin{remark}
We are justified in writing the mixture \eqref{eq:MuIsMixture0} since the map 
$$\mu_E\otimes \mu_{E^{\perp}}(B) : (E,B) \to [0,1], ~~~~\mbox{Borel~}B\subset \mathbb{R}^n,
$$
is a Markov kernel.  This follows from (Borel) measurability of $(E,x,x') \mapsto 1_B(P_E x + P_{E^{\perp}}x')$ for   $x,x'\in \mathbb{R}^n$, Borel $B\subset  {\mathbb{R}^n}$ and Tonelli's theorem.
\end{remark}

 It is unclear whether the logarithmic moment assumption in the third statement of Theorem \ref{thm:splittingwrtxi} is necessary.  It may be the case that $\mu$ having the mixture form \eqref{eq:MuIsMixture0} suffices to imply that $\mu_{E_{dep}}$ satisfies the moment assumption. In view of this, the following is a natural question.
\begin{question}\label{q:moments}
Suppose there is $\theta>0$ such that $\chi_{\xi}(x) \geq \theta \|x\|$ for all $x\in \mathbb{R}^n$. If $\mu$ admits the mixture form \eqref{eq:MuIsMixture0}, does $\mu$   satisfy $\int_{\mathbb{R}^n}  \log(1 + \|x\|) d\mu(x)<\infty$?
\end{question}
An affirmative answer would allow one to remove the   moment assumption from the third statement of Theorem \ref{thm:splittingwrtxi}.  In  some simple cases that yield to   ad hoc analysis, the answer to Question \ref{q:moments} is affirmative. See Appendix \ref{app:Computations}.   

\subsection{Relationship to other work}

Theorem \ref{thm:splittingwrtxi} has connections to a variety of results appearing in the literature, many of which are discussed in detail in Section \ref{sec:Applications}.  At a more fundametal level, the equivalence 1)$\Leftrightarrow$2) is related to (and,  in fact, inspired by) Valdimarsson's characterization of extremizers \cite{valdimarsson08} in the Brascamp--Lieb inequalities  \cite{brascamp1974general, brascamp1976best,lieb1990gaussian}.  The (geometric) Brascamp--Lieb inequality asserts  that, if subspaces $(E_i)_{i=1}^k\subset \mathbf{E}$ and positive reals $(c_i)_{i=1}^k \subset (0,\infty)$ satisfy 
\begin{align}
\sum_{i=1}^k c_i P_{E_i} = \operatorname{id}_{\mathbb{R}^n}, \label{eq:frameCond}
\end{align}
then for any non-negative measurable $f_i \in L^1(E_i)$, $i=1,\dots, k$, we have
\begin{align}
\int_{\mathbb{R}^n} \prod^k_{i=1}( f_i \circ \pi_{E_i})^{c_i}(x) dx \leq   \prod_{i=1}^k \left( \int_{E_i }f_i(x_i) dx_i\right)^{c_i}.  \label{eq:BL}
\end{align}
See, for example, \cite{bennett2008brascamp}.  Admissible $(f_i)_{i=1}^k$ are said to be extremizers if they meet \eqref{eq:BL} with equality. 
Roughly speaking, Valdimarsson characterized extremal functions $(f_i)_{i=1}^k$ in \eqref{eq:BL} by defining the function 
$$
F(x,t) := \prod_{i=1}^k (H_{i,t} f_i)^{c_i}(\pi_{E_i} x), ~~~x\in \mathbb{R}^n, ~t\geq 0,
$$
where $(H_{i,t})_{t\geq 0}$ denotes the heat semigroup acting on $L^1(E_i)$, and showing that, via monotonicity along the heat flow,  $x\mapsto  F(x,t)$ must be separable in certain coordinates when the underlying $(f_i)_{i=1}^k$ are extremal.  He then uses this separability to deduce that each $f_i$ inherits similar separability properties to give a rigid characterization of extremizers.   

A probabilistic interpretation of Valdimarsson's argument becomes apparent through the lens of duality, where the Brascamp--Lieb inequalities correspond to  certain subadditivity properties of entropy \cite{carlen2009subadditivity}.    In this case, an inspection of the duality argument reveals that extremal $(f_i)_{i=1}^k$ in \eqref{eq:BL} admit interpretation as marginal densities of a single probability density on the ambient space $\mathbb{R}^n$.  A probabilistic interpretation of Valdimarsson's result that seems to have gone unnoticed, or at least has been underappreciated,   roughly manifests as the equivalence 1)$\Leftrightarrow$2) in Theorem \ref{thm:splittingwrtxi}.  Indeed, our definition of the independent decomposition of $\mathbb{R}^n$ coincides with that  defined by Valdimarsson \cite{valdimarsson08} when the probability measure $\xi$ is supported on the finite set of points $\{E_1, \dots, E_k\}$. We retain his terminology of ``independent" and ``dependent" subspaces for consistency with that literature.

The equivalence of 3) with the other two statements in Theorem \ref{thm:splittingwrtxi} does not have an analogy in the literature on Brascamp--Lieb inequalities.  However, the mixture \eqref{eq:MuIsMixture0} emerges naturally in a probabilistic context,  characterizing equilibria in certain Boltzmann-like    dynamics, as explained in Section \ref{sec:Boltzmann}.  This leads to a concrete ``physical" interpretation of the geometric Brascamp--Lieb inequalities (or, more precisely, the dual variants presented in Section \ref{sec:geomBL}) as governing convergence to equilibrium of a   stochastic process that describes an excited particle's velocity after it's  put in contact with an equilibrated bath.  

The rest of this paper is organized as follows.  Section \ref{sec:Applications} gives a sequence of applications of Theorem \ref{thm:splittingwrtxi}.  Proofs of Theorem \ref{thm:splittingwrtxi} and supporting results are found in Section \ref{sec:mainProof}.

{\bf Acknowledgement:} The author thanks Efe Aras, Pietro Caputo, and Max Fathi for stimulating conversations.  He also acknowledges NSF-CCF~1750430, the hospitality of the Laboratoire de Probabilit\'es, Statistique et Mod\'elisation (LPSM) at the Universit\'e Paris Cit\'e, and the  Invited Professor program of the Fondation Sciences Math\'ematiques de Paris (FSMP). 

\section{Applications}\label{sec:Applications}

\subsection{Bernstein-type characterization of Gaussians}\label{sec:Bernstein}
The Gaussian distribution has many surprising characterizations; for a nice overview and pointers to further references, interested readers are referred to the excellent monograph by Bryc \cite{Bryc}.   One famous characterization of the Gaussian distribution is Bernstein's characterization via independent linear forms \cite{Bernstein}.
\begin{theorem}
If $X,Y$ are independent random vectors on $\mathbb{R}^m$, such that  $(X+Y)$ and $(X-Y)$ are independent, then $X$ and $Y$ are Gaussian. 
\end{theorem}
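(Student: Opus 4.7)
The plan is to interpret Bernstein's theorem as a direct corollary of Theorem \ref{thm:splittingwrtxi}, by encoding the two independence hypotheses as splittings of the joint law of $(X,Y)$ along two different orthogonal pairs of subspaces of $\mathbb{R}^{2m}$. Let $Z=(X,Y)$ and let $\mu \in P(\mathbb{R}^{2m})$ denote its law. Consider the subspaces
$$
E_1 := \mathbb{R}^m \times \{0\}, \qquad E_2 := \{(v,v) : v \in \mathbb{R}^m\},
$$
so that $E_1^\perp = \{0\} \times \mathbb{R}^m$ and $E_2^\perp = \{(v,-v) : v\in \mathbb{R}^m\}$. Independence of $X$ and $Y$ says exactly that $\mu = \mu_{E_1} \otimes \mu_{E_1^\perp}$, and independence of $X+Y$ and $X-Y$ says exactly that $\mu = \mu_{E_2} \otimes \mu_{E_2^\perp}$ (since the orthogonal projections $P_{E_2} Z$ and $P_{E_2^\perp}Z$ are linear images of $X+Y$ and $X-Y$ respectively). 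Thus $\mu$ splits with respect to $\xi := \tfrac{1}{2}(\delta_{E_1} + \delta_{E_2}) \in P(\mathbf{E})$.

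Next I would compute the zero set of $\chi_\xi$. For $z = (u,v) \in \mathbb{R}^m \times \mathbb{R}^m$, the two summands defining $\chi_\xi(z)$ are, up to positive constants, $\min\{\|u\|,\|v\|\}$ and $\min\{\|u+v\|,\|u-v\|\}$. For both to vanish, the first forces $u=0$ or $v=0$, and in either case the second forces the remaining vector to be zero as well. Hence $\chi_\xi^{-1}(0) = \{0\}$, so there are no nonzero independent subspaces and the entire ambient space is the dependent subspace, $E_{dep} = \mathbb{R}^{2m}$.

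Applying the equivalence 1)$\Leftrightarrow$2) of Theorem \ref{thm:splittingwrtxi} then yields that $\mu = \mu_{E_{dep}}$ is Gaussian on $\mathbb{R}^{2m}$, with covariance that splits with respect to $\xi$. In particular, $X$ and $Y$ are jointly Gaussian, and so each marginal is Gaussian, which is the conclusion of Bernstein's theorem.

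There is no real obstacle here once the theorem is in hand; the only conceptual point is to recognize the two hypotheses as two orthogonal splittings and to verify that together they leave no nontrivial independent subspace. The computation of $\chi_\xi^{-1}(0) = \{0\}$ is the sole (short) calculation, and it is precisely this rigidity that forces the joint distribution onto the Gaussian dependent subspace. I would note that the argument does not require any moment assumption on $X$ or $Y$, since it invokes the equivalence with statement 2) rather than the mixture characterization in statement 3).
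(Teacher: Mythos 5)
Your proposal is correct and follows essentially the same route as the paper: the paper uses exactly the subspaces $E_1=\{(x,0)\}$ and $E_2=\{(x,x)\}$ in $\mathbb{R}^{2m}$ and deduces Bernstein's theorem from the implication 1)$\Rightarrow$2) of Theorem \ref{thm:splittingwrtxi} via the observation that $E_{dep}=\mathbb{R}^{2m}$ (packaged there as Proposition \ref{prop:GaussianCharacterization}, whose hypothesis $\cap_i(E_i\cup E_i^{\perp})=\{0\}$ is precisely your computation $\chi_{\xi}^{-1}(0)=\{0\}$). Your verification of the splitting along $(E_2,E_2^{\perp})$ and of the zero set of $\chi_{\xi}$ is sound, so there is nothing to add.
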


Among its other applications, Bernstein's theorem implies Maxwell's characterization of centered isotropic Gaussian distributions as the unique rotationally-invariant distributions with independent marginals.  As a first application of Theorem \ref{thm:splittingwrtxi}, we observe that  Bernstein's theorem follows as a special case. Indeed, first note that we have:
\begin{proposition}\label{prop:GaussianCharacterization}
Let $(E_i)_{i=1}^k \subset \mathbf{E}$ satisfy $\cap_{i=1}^k (E_i \cup E_i^{\perp}) = \{0\}$.  If  $\mu\in P(\mathbb{R}^n)$ splits along $({E}_i, {E}_i^{\perp})$ for each $i=1,\dots, k$, then  $\mu$ is Gaussian.
\end{proposition}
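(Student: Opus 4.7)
The plan is to apply Theorem \ref{thm:splittingwrtxi} to the discrete measure $\xi = \tfrac{1}{k}\sum_{i=1}^{k} \delta_{E_i} \in P(\mathbf{E})$. The hypothesis that $\mu$ splits along each $(E_i, E_i^{\perp})$ is exactly the statement that $\mu$ splits with respect to this $\xi$, so all three equivalent characterizations in Theorem \ref{thm:splittingwrtxi} are at our disposal. To conclude that $\mu$ is Gaussian via statement 2), it suffices to show that the independent decomposition of $\mathbb{R}^n$ with respect to $\xi$ is trivial in the sense that $E_{dep} = \mathbb{R}^n$, i.e.\ there are no nonzero independent subspaces.

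Next, I would compute the zero-set $\chi_{\xi}^{-1}(0)$ directly from its definition. Since $\xi$ is a uniform combination of point masses at $E_1,\ldots,E_k$, one has
$$
\chi_{\xi}(x) = \frac{1}{k}\sum_{i=1}^{k} \min\{\|P_{E_i}x\|, \|P_{E_i^{\perp}}x\|\},
$$
which is a sum of nonnegative terms. Therefore $\chi_{\xi}(x) = 0$ if and only if, for each $i$, at least one of $P_{E_i}x$ or $P_{E_i^{\perp}}x$ vanishes, i.e.\ $x \in E_i \cup E_i^{\perp}$. Taking the intersection over $i$, I would conclude
$$
\chi_{\xi}^{-1}(0) = \bigcap_{i=1}^{k}\bigl(E_i \cup E_i^{\perp}\bigr) = \{0\},
$$
where the last equality is exactly the hypothesis. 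By Proposition \ref{prop:ChiZeroSet} (and the convention that the trivial subspace is absorbed into the decomposition), this forces every independent subspace $E_{\alpha}$ to be $\{0\}$, so $E_{dep} = \mathbb{R}^n$.

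Finally, I would invoke the implication 1)$\Rightarrow$2) of Theorem \ref{thm:splittingwrtxi}: the product decomposition \eqref{eq:muProductXi} collapses to $\mu = \mu_{E_{dep}}$, which is Gaussian by the theorem. This gives the proposition. The only conceptual step to be careful about is the identification of $\chi_{\xi}^{-1}(0)$ with the hypothesized intersection, together with the (essentially bookkeeping) observation that $\chi_{\xi}^{-1}(0) = \{0\}$ forces $E_{dep}$ to be all of $\mathbb{R}^n$; no genuine obstacle arises beyond citing Theorem \ref{thm:splittingwrtxi} correctly.
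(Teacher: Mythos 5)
Your argument is, almost verbatim, the short proof the paper attaches to Proposition~\ref{prop:GaussianCharacterization} in Section~\ref{sec:Bernstein}: take $\xi$ uniform on $\{E_1,\dots,E_k\}$, observe that the hypothesis $\cap_{i=1}^k(E_i\cup E_i^{\perp})=\{0\}$ forces $\chi_{\xi}^{-1}(0)=\{0\}$ and hence $E_{dep}=\mathbb{R}^n$, and invoke 1)$\Rightarrow$2) of Theorem~\ref{thm:splittingwrtxi}. Your identification of $\chi_{\xi}^{-1}(0)$ with $\cap_{i=1}^k(E_i\cup E_i^{\perp})$ is correct.

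The difficulty is that, as a standalone proof of the proposition, this is circular within the paper's logical architecture. The implication 1)$\Rightarrow$2) of Theorem~\ref{thm:splittingwrtxi} is proved by reducing to the discrete case (Theorem~\ref{thm:rigidSplit}), and the proof of Theorem~\ref{thm:rigidSplit} ends with ``an application of Proposition~\ref{prop:GaussianCharacterization}'': the proposition is precisely the base case $E_{dep}=\mathbb{R}^n$ of the rigidity argument, not a corollary of it. This is why the paper supplies a second, self-contained proof in Section~\ref{sec:mainProof}, and that proof carries all the analytic content your proposal omits. There, one first shows (Lemma~\ref{lem:finiteMoments}) that the hypothesis $\cap_{i=1}^k(E_i\cup E_i^{\perp})=\{0\}$ yields $\max_i\min\{\|P_{E_i}x\|,\|P_{E_i^{\perp}}x\|\}\geq\theta\|x\|$ by a compactness argument, whence the splitting property gives a self-majorization of the tail function and finite first moments; one then regularizes by convolving with $\gamma$ so that the density $f$ is smooth, non-vanishing, and satisfies $\|\nabla\log f(x)\|\leq C(1+\|x\|)$ (Lemma~\ref{lem:linearGrowth}); finally one takes the Fourier transform of $\nabla\log f$ as a tempered distribution, uses the splitting along each $(E_i,E_i^{\perp})$ to show this transform is supported at the origin, concludes $\nabla\log f$ is polynomial and hence affine by the growth bound, so $f$ is a Gaussian density. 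None of this appears in your proposal, so the essential work remains to be done. If your intent were only to record that the proposition is a special case of Theorem~\ref{thm:splittingwrtxi}, the argument is fine as exposition; but it cannot serve as a proof given how the paper proves that theorem.
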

\begin{proof}
Take $\xi$ to be a uniform distribution on $\{E_1, \dots, E_k\}$ and note that definitions imply $\chi_{\xi}(x) > 0$ for all $x\neq 0$, so that $E_{dep} = \mathbb{R}^n$.  Hence, the claim follows from the implication 1)$\Rightarrow$2) of Theorem \ref{thm:splittingwrtxi}. 
\end{proof}

To see that Bernstein's theorem follows as a consequence, consider   ambient space $\mathbb{R}^n$ with dimension $n=2m$, and define
$$
E_1 = \{ (x,0) ;  x\in \mathbb{R}^m\}, ~~~E_2 = \{ (x,x); x\in \mathbb{R}^m\},
$$ 
where $0$  denotes the zero-vector in $\mathbb{R}^m$. 
If $X,Y$ satisfy the hypotheses of Bernstein's theorem, then $\mu = \operatorname{law}(X,Y)$ splits along $(E_i,E_i^{\perp})$ for $i=1, 2$.  Of course, $\cap_{i=1}^2 (E_i \cup E_i^{\perp}) = \{0\}$, so we conclude Bernstein's theorem.

 \subsection{Geometric Brascamp--Lieb-type inequalities}\label{sec:geomBL}
  
For the applications in this section, we will assume that $\xi\in P(\mathbf{E})$  satisfies the operator inequality 
\begin{align}
\int_{\mathbf{E}} P_E d\xi(E) \leq (1-\lambda) \id_{\mathbb{R}^n} \label{eq:XiFrame}
\end{align}  
  for some $\lambda\geq 0$.  Obviously, there always exists such a $\lambda$ in the interval $[0,1]$.%

For two probability measures $P,Q$ defined on a common measurable space $(\Omega, \mathcal{F})$, the relative entropy of $P$ with respect to $Q$ is defined as 
$$
D(P\|Q) := \begin{cases}
\int_{\Omega} \log \left( \frac{dP}{dQ}\right) dP & \mbox{if $P\ll Q$}\\
+\infty & \mbox{otherwise.}
\end{cases}
$$
We briefly recall the well-known facts that $(P,Q) \mapsto D(P\|Q)$ is non-negative, convex, and weakly lower semicontinuous.  Moreover, if  $T : (\Omega, \mathcal{F}) 	\to (\Xi, \mathcal{G})$ is a measurable map between measure spaces, we have the so-called data processing inequality $D(T \#P \|T\#Q) \leq D(P\|Q)$.  If $T$ is bijective with measurable inverse, then this is an equality.  
  
 The main result of this section is the following family of (dual) Brascamp--Lieb-type inequalities.  The classical (dual, geometric) Brascamp--Lieb inequalities correspond to the special case where $\mu = \gamma$ and $\xi$ is a discrete measure supported on a finite number of subspaces.  
 \begin{theorem}\label{thm:BLForSplitMu}
If $\mu\in P(\mathbb{R}^n)$ splits with respect to  $\xi\in P(\mathbf{E})$ satisfying \eqref{eq:XiFrame}, then 
\begin{align}
\int_{\mathbf{E}} D(\nu_{E} \| \mu_{E}) d\xi(E) \leq (1-\lambda) D(\nu\|\mu), ~~~\forall \nu\in P(\mathbb{R}^n).  \label{eq:BLmuSplits}
\end{align}
\end{theorem}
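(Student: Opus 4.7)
The plan is to establish a Fisher-information version of \eqref{eq:BLmuSplits} and integrate along the Langevin semigroup reversible with respect to $\mu$, paralleling Stam-type proofs of Brascamp--Lieb inequalities for entropy.

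\emph{Step 1 (mollification).} I first reduce to the case where $\mu$ has a smooth strictly positive density. For $\epsilon>0$, set $\mu^\epsilon := \mu * \gamma_\epsilon$ and $\nu^\epsilon := \nu * \gamma_\epsilon$ with $\gamma_\epsilon = N(0,\epsilon\,\id)$. Isotropy of $\gamma_\epsilon$ ensures $\mu^\epsilon = \mu^\epsilon_E \otimes \mu^\epsilon_{E^\perp}$ whenever $\mu = \mu_E \otimes \mu_{E^\perp}$, so $\mu^\epsilon$ still splits with respect to $\xi$. Data processing gives $D(\nu^\epsilon\|\mu^\epsilon)\leq D(\nu\|\mu)$ and likewise on $E$-marginals, while lower semicontinuity of $D(\cdot\|\cdot)$ yields $D(\nu_E\|\mu_E)\leq \liminf_{\epsilon\to 0} D(\nu^\epsilon_E\|\mu^\epsilon_E)$; Fatou's lemma then lets me recover \eqref{eq:BLmuSplits} for $\mu$ from the regularized inequality. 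Henceforth I assume $\mu$ has smooth positive density.

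\emph{Step 2 (Fisher-information inequality).} For $\rho\ll\mu$ write $f := d\rho/d\mu$. Since $\mu = \mu_E \otimes \mu_{E^\perp}$ for $\xi$-a.e.\ $E$, the density of $\rho_E$ with respect to $\mu_E$ is $g(x_E) = \int f(x_E,y)\,d\mu_{E^\perp}(y)$. A vector-valued Cauchy--Schwarz applied to $\nabla_E g = \int (\nabla_E f/\sqrt{f})\sqrt{f}\,d\mu_{E^\perp}$ gives the pointwise bound $|\nabla_E g|^2/g \leq \int |\nabla_E f|^2/f\,d\mu_{E^\perp}$, which integrated against $\mu_E$ yields $I(\rho_E\|\mu_E) \leq \EE_\rho\bigl[|P_E\nabla \log f|^2\bigr]$. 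Integrating in $E$ against $\xi$ and using $|P_E v|^2 = v^\top P_E v$ together with \eqref{eq:XiFrame},
\begin{align*}
\int_{\mathbf{E}} I(\rho_E\|\mu_E)\,d\xi(E) &\leq \EE_\rho\!\left[(\nabla \log f)^\top\!\left(\int P_E\,d\xi\right)\!(\nabla \log f)\right] \\
&\leq (1-\lambda)\,I(\rho\|\mu).
\end{align*}

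\emph{Step 3 (semigroup integration).} Let $(\nu^t)_{t\geq 0}$ evolve under the overdamped Langevin dynamics reversible w.r.t.\ $\mu$, with generator $L = \Delta + \nabla\log\mu\cdot\nabla$. Since $\log\mu$ decouples additively across $(E,E^\perp)$ whenever $\mu = \mu_E\otimes\mu_{E^\perp}$, so does the generator: $L = L^E + L^{E^\perp}$, so $\pi_E$ commutes with the semigroup and $(\nu^t)_E = (\nu_E)^t$, with the latter driven by the analogous $\mu_E$-reversible flow on $E$. The de Bruijn identity $\frac{d}{dt}D(\nu^t\|\mu) = -I(\nu^t\|\mu)$, integrated in $t$ under $D(\nu^t\|\mu)\to 0$, gives $D(\nu\|\mu) = \int_0^\infty I(\nu^t\|\mu)\,dt$ and likewise for each marginal. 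Applying the Step~2 bound with $\rho = \nu^t$ and swapping integrals via Fubini,
\begin{align*}
\int D(\nu_E\|\mu_E)\,d\xi &= \int_0^\infty\!\!\int I((\nu^t)_E\|\mu_E)\,d\xi\,dt \\
&\leq (1-\lambda)\int_0^\infty I(\nu^t\|\mu)\,dt = (1-\lambda)\,D(\nu\|\mu).
\end{align*}

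The main obstacle is establishing the entropic convergence $D(\nu^t\|\mu)\to 0$ along the Langevin flow, since no functional inequality is assumed for $\mu$. Theorem~\ref{thm:splittingwrtxi} mitigates this: because $\mu = (\otimes_\alpha \mu_{E_\alpha})\otimes \mu_{E_{dep}}$ with $\mu_{E_{dep}}$ Gaussian, one can replace the full Langevin flow by the identity on each independent factor together with the Ornstein--Uhlenbeck semigroup on $E_{dep}$, reducing the convergence question to the classical Gaussian case where it follows from the log-Sobolev inequality.
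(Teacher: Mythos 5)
Your Steps 1 and 2 are sound, and the route is genuinely different from the paper's: you derive a Fisher-information analogue of \eqref{eq:BLmuSplits} (which indeed follows from the $\xi$-a.s.\ product structure alone, via Cauchy--Schwarz and \eqref{eq:XiFrame}) and then try to integrate it along a $\mu$-reversible flow via de Bruijn. The paper instead proves the Gaussian case \eqref{eq:GaussianBL} by Lehec's F\"ollmer-process argument and transports it to $\mu$ by a homeomorphism $T=(\oplus_\alpha T_\alpha)\oplus\Sigma^{1/2}$ commuting with $P_E$ $\xi$-a.s., which is exactly where the rigidity of Theorem \ref{thm:splittingwrtxi} enters.

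The gap is in Step 3, and your proposed fix does not close it. To write $D(\nu\|\mu)=\int_0^\infty I(\nu^t\|\mu)\,dt$ and $D(\nu_E\|\mu_E)=\int_0^\infty I((\nu^t)_E\|\mu_E)\,dt$ you need entropic convergence $D(\nu^t\|\mu)\to 0$ (monotonicity only gives $\int_0^\infty I(\nu^t\|\mu)\,dt\le D(\nu\|\mu)$, which bounds the wrong side for the marginal terms). For the full Langevin flow this requires a log-Sobolev-type inequality for $\mu$, and the factors $\mu_{E_\alpha}$ are completely arbitrary probability measures, so no such inequality is available. Your replacement flow --- identity on each $E_\alpha$ and Ornstein--Uhlenbeck on $E_{dep}$ --- destroys the telescoping instead of saving it: its equilibrium is not $\mu$ but the joint law of the $E_{ind}$-components of $\nu$ tensored with $\mu_{E_{dep}}$, so $D(\nu^t\|\mu)$ converges to $D(\nu_{E_{ind}}\|\otimes_\alpha\mu_{E_\alpha})$, which is strictly positive in general; moreover its de Bruijn identity dissipates only the $E_{dep}$-directional Fisher information, so the Step 2 bound no longer matches the dissipation. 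The leftover ``independent part'' of the inequality is a Shearer-type statement that would still need a separate argument. The clean way to salvage your scheme is to first perform the paper's transport reduction to $\mu=\gamma$ (where the Gaussian LSI gives the needed entropic ergodicity of the OU flow), after which your Steps 2--3 do go through --- but that reduction is precisely the content you are missing.
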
 
 \begin{remark}
 If $\xi$ is such that  $E_{dep}=\{0\}$, then \eqref{eq:BLmuSplits} is equivalent to Shearer's inequality for relative entropy \cite{chung1986some}.  There is no obvious way to  deduce the general case from this special case (if there were, the Brascamp--Lieb inequalities would be derivable as a special case of the Loomis--Whitney inequalities).  
  \end{remark} 
 
 \begin{proof} 
 We begin by observing that \eqref{eq:BLmuSplits} holds when the reference measure $\mu$ is replaced by the standard Gaussian measure $\gamma$, which splits along  $(E,E^{\perp})$ for every choice of subspace.  That is, we claim that 
 \begin{align}
\int_{\mathbf{E}} D(\nu_{E} \| \gamma_{E}) d\xi(E) \leq (1-\lambda) D(\nu\|\gamma), ~~~\forall \nu\in P(\mathbb{R}^n). \label{eq:GaussianBL}
\end{align}
This is a variation on the dual form of the geometric Brascamp--Lieb inequalities, which correspond to the special case where $\xi$ is a discrete measure supported on a finite number of points.  Despite this minor difference, Lehec's probabilistic proof of the geometric Brascamp--Lieb inequalities (based on a variational representation of entropy due to F\"ollmer \cite{Follmer85, Follmer86}) can be executed nearly verbatim to conclude \eqref{eq:GaussianBL}; see \cite[proof of Theorem 16]{lehecBL}.

Now, we'll use a transportation argument to conclude \eqref{eq:BLmuSplits} by leveraging the rigid product structure \eqref{eq:muProductXi} implied by the splitting property of $\mu$.  Toward this end, let $T: \mathbb{R}^n \to \mathbb{R}^n$ be a homeomorphism satisfying \begin{align}
T \circ P_E = P_E \circ T,  \mbox{ $\xi$-a.s.} \label{eq:TcommutesPE}
\end{align}
As a homeomorphism, $T^{-1}$ exists and is continuous (and therefore measurable), and we have  for $\xi$-a.e. $E$
\begin{align*}
D(  \nu_E \|  (T\#\gamma)_E )  
& = D((P_E\circ T)\# (T^{-1}\#\nu) \| (P_E\circ T) \#  \gamma )\\
&= D((T\circ P_E )\# (T^{-1}\#\nu) \| (T\circ P_E) \#  \gamma )\\
&= D(P_E \# (T^{-1}\#\nu) \|  P_E  \#  \gamma ) = D(  (T^{-1}\#\nu)_E \|     \gamma_E ),
\end{align*}
where the first line is definitions, the second is the assumed commutativity, and the third follows since relative entropy is invariant under homeomorphisms of the measure space.  Thus, integrating with respect to $\xi$ gives
\begin{align*}
\int_{\mathbf{E}} D(  \nu_E \|  (T\#\gamma)_E )   d\xi(E) 
&=  \int_{\mathbf{E}}D(  (T^{-1}\#\nu)_E \|     \gamma_E )  d\xi(E) \\
&\leq (1-\lambda) D( T^{-1}\#\nu \|     \gamma ) \\
&= (1-\lambda) D(  \nu \|    T\#\gamma ),
\end{align*}
where the inequality is \eqref{eq:GaussianBL} applied to the measure $\nu \leftarrow T^{-1}\#\nu$, and the final step is again because relative entropy is invariant under homeomorphisms of the measure space.  Hence, \eqref{eq:BLmuSplits} holds whenever $\mu$ is the pushforward of $\gamma$ by a homeomorphism  $T: \mathbb{R}^n \to \mathbb{R}^n$ that commutes with $P_E$,   $\xi$-a.s.   

Now, let $\mu$ split with respect to $\xi$.  By Theorem \ref{thm:splittingwrtxi},  $\mu$ admits the product form
$$
\mu =  \left( \otimes_{\alpha} \mu_{E_{\alpha}} \right) \otimes \mu_{E_{dep}},
$$
where $\mu_{E_{dep}} \in P(E_{dep})$ is Gaussian, and $\mu_{E_{\alpha}} \in P(E_{\alpha})$.  We may assume without loss of generality that $\mu_{E_{dep}}$ is centered, and therefore $\mu_{E_{dep}} = N(0,\Sigma)$, where the covariance $\Sigma : E_{dep} \to E_{dep}$ splits with respect to $\xi$.  In particular, 
$$
\Sigma^{1/2} (E_{dep}\cap E) \subset (E_{dep}\cap E) ~~\mbox{and}~~\Sigma^{1/2}  (E_{dep}\cap E^{\perp}) \subset (E_{dep}\cap E^{\perp}), ~~~\xi-a.s.,
$$
due to the splitting property of $\mu$, and the fact that $\Sigma$ and its positive semidefinite square root $\Sigma^{1/2}$ share common eigenspaces.  

Now, let us assume $\mu$ has smooth non-vanishing density with respect to Lebesgue measure.  For each nonzero independent subspace $E_{\alpha}$, there exists a homeomorphism $T_{\alpha}: E_{\alpha}\to E_{\alpha}$ such that $T_{\alpha} \#\gamma_{E_{\alpha}} = \mu_{E_{\alpha}}$ \cite[Corollary 1]{CorderoFigalli}.  On $E_{dep}$ we define the linear map $T_{dep}: x\in E_{dep} \mapsto  \Sigma^{1/2}x$; therefore $T_{dep}\#\gamma_{E_{dep}} = \mu_{E_{dep}}$.  Since we are assuming $\mu$ has full support, $\Sigma$ is positive definite, and therefore $T_{dep}$ is also a homeomorphism.  Now, we define the operator $T:\mathbb{R}^n \to \mathbb{R}^n$ via the direct sum 
$$
T = \left( \oplus_{\alpha} T_{\alpha} \right) \oplus T_{dep},
$$
with individual components acting on the corresponding components of the independent decomposition of $\mathbb{R}^n$. 
By construction,   $T$ is a homeomorphism satisfying \eqref{eq:TcommutesPE}.   Hence,   \eqref{eq:BLmuSplits}  holds whenever $\mu$ has smooth non-vanishing density with respect to Lebesgue measure.

The general case where $\mu$ fails to admit a non-vanishing density is handled by localization.  To this end,  let $(H^*_t)_{t\geq 0}$ denote the (adjoint) heat semigroup acting on $P(\mathbb{R}^n)$.  Since $H^*_t$ operates independently on orthogonal components, it is a  consequence of the data processing inequality that 
\begin{align*}
D(\nu_E \| \mu_E) &\geq D((H^*_t \nu)_E \| (H^*_t  \mu)_E).
\end{align*}
By weak lower semicontinuity of entropy, we have the reverse inequality in the limit as $t\downarrow 0$:
 $$
 \liminf_{t\geq 0} D((H^*_t \nu)_E \| (H^*_t \mu)_E) \geq D(\nu_E\|\mu_E). 
 $$

For every $t>0$ the measure $H^*_t \mu$ admits a smooth non-vanishing density with respect to Lebesgue measure.  Since $\mu \mapsto H^*_t\mu$ preserves the splitting property of $\mu$ for all $t\geq 0$, we have   
\begin{align*}
\int_{\mathbf{E}} D(\nu_E \| \mu_E) d\xi(E) 
&\leq \liminf_{t \to 0} \int_{\mathbf{E}} D((H^*_t  \nu)_E \| (H^*_t  \mu)_E) d\xi(E)  \\
&\leq  \liminf_{t\to 0}  (1-\lambda) D( H^*_t  \nu \| H^*_t \mu)\\
&\leq  (1-\lambda) D(   \nu \|  \mu),
\end{align*} 
 where the first inequality is Fatou's lemma, the second is \eqref{eq:BLmuSplits} for reference measure having smooth non-vanishing density, and the third inequality is data processing.  
\end{proof}

\subsection{Linear Boltzmann-type dynamics} \label{sec:Boltzmann}

Consider a physical experiment where two particles of equal mass and respective velocities $v, v_* \in \mathbb{R}^n$ undergo an elastic collision. By conservation of energy and momentum, the particles necessarily retain velocity components on some subspace $E\in \mathbf{E}$, and exchange on $E^{\perp}$.  That is, the post-collision velocities of the first and second particles are, respectively:
\begin{align}
v' = P_E v + P_{E^{\perp}}v_*  , ~~\mbox{and}~~v'_* = P_E v_* + P_{E^{\perp}}v. \label{model}
\end{align}
In a Hamiltonian-based model, the subspace $E$ would be a function of the particle positions and a suitable potential energy function.  In many-particle systems, however,  it will  generally be more tractable to assume a spatially homogenous model where the subspace $E$ is drawn randomly, according to a given distribution $\xi\in P(\mathbf{E})$.  Then, the post-collision velocities are random variables, with distributions induced by $\xi$ and the pre-collision velocities $v,v_*$.  We remark that the post-collision kinetic energy of the (first) particle can be written as $\frac{1}{2}\|v'\|^2 = \frac{1}{2}\|P_Ev\|^2 + \frac{1}{2}\|P_{E^{\perp}}v_*\|^2$. So, when $E\sim \xi$, inequality \eqref{eq:XiFrame} has the interpretation that, on average, a fraction at least $\lambda$ of the total kinetic energy is exchanged  through   collision.

Switching gears for a moment, let $\mu\in P(\mathbb{R}^n)$, and define a Markov semigroup $P_t= e^{t\mathcal{L}}, t\geq 0,$ via the infinitesimal generator
$$
\mathcal{L} f(v) := \int_{\mathbf{E}} \int_{ \mathbb{R}^n} f(P_E v + P_{E^{\perp}}v_*) d\mu(v_*) d\xi(E) - f(v), 
$$
on bounded measurable $f: \mathbb{R}^n \to \mathbb{R}$.  For initial data $\nu_0\in P(\mathbb{R}^n)$, we define the evolution $(\nu_t)_{t\geq 0}$ via duality
$$
\int_{\mathbb{R}^n} f d\nu_t := \int_{\mathbb{R}^n} P_t f d\nu_0 , ~~~f\in C_b(\mathbb{R}^n). 
$$

The Markov semigroup $(P_t)_{t\geq 0}$ is related to the physical model previously described as follows.  Consider an experiment where a unit-mass particle, excited with initial velocity $V_0 \sim \nu_0$, is placed in contact with a {bath} consisting of unit-mass particles, each having velocities distributed i.i.d.~according to some background distribution $\mu$.  If elastic collisions modeled by \eqref{model} occur between   the excited particle and independent particles in the bath following a rate-1 Poisson point process, with subspace $E\sim \xi$ drawn independently for each collision, the velocity $V_t$ of the excited particle at time $t$  has law $\nu_t$.  The bath is in  {equilibrium} with respect to the described dynamics if, for $\nu_0 = \mu$, we have $\nu_t = \mu$ for all $t\geq 0$.  The \emph{temperature} of the bath is $\int_{\mathbb{R}^n}\|v\|^2 d\mu(v)$.

Theorem \ref{thm:splittingwrtxi}  immediately provides a rigid characterization of finite-temperature\footnote{The assumption of finite temperature is stronger than required, since our application of Theorem \ref{thm:splittingwrtxi} only requires $\mu_{E_{dep}}$ to have finite logarithmic moment.  However, all realizable physical systems have finite temperature, so it seems more physically relevant to impose this assumption instead of merely finite  logarithmic moments on $E_{dep}$.} baths in equilibrium with respect to our dynamics.
\begin{proposition}\label{prop:EquilibriumSplits}
A finite-temperature bath with background distribution $\mu$ is in equilibrium with respect to the dynamics $(P_t)_{t\geq 0}$ if and only if $\mu$ splits with respect to $\xi$.   Moreover, in this case, $\mu$ is reversible for $(P_t)_{t\geq 0}$.
\end{proposition}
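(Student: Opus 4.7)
The plan is to reformulate equilibrium as a stationarity condition on the generator $\mathcal{L}$, identify that condition with the mixture representation \eqref{eq:MuIsMixture0}, and then invoke Theorem \ref{thm:splittingwrtxi} to equate the mixture and splitting formulations. The finite-temperature hypothesis enters only to verify the logarithmic moment assumption appearing in statement 3) of that theorem.

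First I would observe that $\mu$ is an equilibrium if and only if $\int \mathcal{L} f\, d\mu = 0$ for every bounded measurable $f$. A Fubini computation yields
$$
\int \mathcal{L} f\, d\mu \;=\; \int_{\mathbf{E}} \int_{\mathbb{R}^n} f\, d(\mu_E \otimes \mu_{E^{\perp}})\, d\xi(E) \;-\; \int f\, d\mu \;=\; \int f\, d\bar\mu - \int f\, d\mu,
$$
where $\bar\mu := \int_{\mathbf{E}} (\mu_E \otimes \mu_{E^{\perp}})\, d\xi(E)$ is the mixture kernel. Equilibrium is thus equivalent to $\mu = \bar\mu$, i.e., to \eqref{eq:MuIsMixture0}. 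The finite-temperature assumption forces $\int \|v\|\, d\mu(v) < \infty$, which by $\log(1+\|x\|) \leq \|x\|$ and contractivity of $\pi_{E_{dep}}$ yields the logarithmic moment condition $\int_{E_{dep}} \log(1 + \|x\|)\, d\mu_{E_{dep}}(x) < \infty$. The equivalence 1) $\Leftrightarrow$ 3) in Theorem \ref{thm:splittingwrtxi} then converts \eqref{eq:MuIsMixture0} into the statement that $\mu$ splits with respect to $\xi$.

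For the reversibility addendum, note that the one-step jump kernel is $Q(v, \cdot) = \operatorname{law}(P_E v + P_{E^{\perp}} V_*)$ with $E \sim \xi$ and $V_* \sim \mu$ independent, and that $\mathcal{L} f(v) = \int f\, dQ(v,\cdot) - f(v)$. It therefore suffices to check detailed balance $\mu(dv)\, Q(v, dv') = \mu(dv')\, Q(v', dv)$, i.e., that $(V, V') \stackrel{d}{=} (V', V)$ when $V, V_* \sim \mu$ are i.i.d., $E \sim \xi$ is independent, and $V' = P_E V + P_{E^{\perp}} V_*$. Conditioning on $E$ and using the splitting property of $\mu$, we may realize $V = A + B$ and $V_* = A_* + B_*$ with $A, A_* \sim \mu_E$ and $B, B_* \sim \mu_{E^{\perp}}$ mutually independent. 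Then $(V, V') = (A + B,\, A + B_*)$ while $(V', V) = (A + B_*,\, A + B)$, which have the same joint law by the i.i.d.\ exchange $B \leftrightarrow B_*$. Integrating over $E$ gives detailed balance, so $\mathcal{L}$ is self-adjoint on $L^2(\mu)$ and $(P_t)_{t \geq 0}$ is $\mu$-reversible.

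The main obstacle is modest: one should confirm that the pathwise notion of equilibrium for this bounded-rate pure-jump process with bounded test functions really coincides with $\mathcal{L}^* \mu = 0$, and verify that the finite-temperature hypothesis truly implies the logarithmic moment condition on $\mu_{E_{dep}}$. Both are routine, leaving the proposition as essentially a direct corollary of Theorem \ref{thm:splittingwrtxi} together with a one-line i.i.d.\ symmetry argument for detailed balance.
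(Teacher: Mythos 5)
Your proposal is correct and follows essentially the same route as the paper: identify equilibrium with the mixture identity \eqref{eq:MuIsMixture0} via $\int \mathcal{L}f\,d\mu = 0$, use the finite-temperature hypothesis to supply the logarithmic moment condition, and invoke the equivalence 1)$\Leftrightarrow$3) of Theorem \ref{thm:splittingwrtxi}. Your detailed-balance argument via the exchange $B \leftrightarrow B_*$ is exactly the verification the paper leaves to the reader for reversibility.
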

\begin{proof}
In order for the bath to be invariant to the described dynamics, we require that $\mu$ is an invariant measure of the semigroup $(P_t)_{t\geq 0}$.  In particular, for all $f\in C_b(\mathbb{R}^n)$, we have $\int_{\mathbb{R}^n} \mathcal{L} f d \mu = 0$.  This may be rewritten as 
\begin{align*}
\int_{\mathbf{E}} \int_{\mathbb{R}^n}  f  d(\mu_E \otimes  \mu_{E^{\perp}} )  d\xi(E) = \int_{\mathbf{E}} \int_{\mathbb{R}^n}  f d \mu d\xi(E) = \int_{\mathbb{R}^n}  f d \mu.
\end{align*}
In other words, the bath is in equilibrium if and only if $\mu$ is the mixture \eqref{eq:MuIsMixture0}. Under the finite-temperature assumption, this is equivalent to $\mu$ splitting with respect to $\xi$  by Theorem \ref{thm:splittingwrtxi}.

To show reversibility, we need to show $\int_{\mathbb{R}^n} f \mathcal{L}g d\mu = \int_{\mathbb{R}^n} g \mathcal{L}f d\mu$ for, say, bounded continuous $f,g$.  This is easily verified using the characterization that $\mu$ splits along $(E,E^{\perp})$, $\xi$-a.s. 
\end{proof}
 
 Having characterized all equilibrium states for our dynamics, we now establish quantitative rates of convergence to equilibrium.  

\begin{theorem}\label{thm:LSI_PI} Let $\lambda\geq 0$ be such that  \eqref{eq:XiFrame} holds.  If $\mu$ splits with respect to  $\xi$, then 
\begin{align}
D(\nu_t  \| \mu ) \leq e^{-\lambda t} D(\nu_0 \| \mu), ~~~\forall t\geq 0,  ~\nu_0 \in P(\mathbb{R}^n).  \label{eq:EntropyDecay}
\end{align}
Additionally, for all $f\in L^2(\mu)$, 
\begin{align}
\ovar_{\mu}(P_t f)  \leq e^{-2\lambda t} \ovar_{\mu}(f). \label{eq:specGapIneq}
\end{align}
\end{theorem}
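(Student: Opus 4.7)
The plan is to derive both \eqref{eq:EntropyDecay} and \eqref{eq:specGapIneq} from the Brascamp--Lieb-type estimate of Theorem~\ref{thm:BLForSplitMu}. For \eqref{eq:EntropyDecay} I will differentiate $t \mapsto D(\nu_t\|\mu)$ along the reversible dynamics and bound the entropy dissipation by $(1-\lambda) D(\nu_t\|\mu)$ via Jensen's inequality followed by \eqref{eq:BLmuSplits}. For \eqref{eq:specGapIneq} I will first linearize \eqref{eq:BLmuSplits} around $\mu$ to obtain an $L^2$ analog, and then combine this with reversibility (Proposition~\ref{prop:EquilibriumSplits}) to get the factor of $2$ in the rate via the Dirichlet form. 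Gr\"onwall's inequality concludes in each case.

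For the entropy bound, decompose $\mathcal{L} = Q - \id$ with
$$
Q f(v) := \int_{\mathbf{E}} \int_{\mathbb{R}^n} f(P_E v + P_{E^\perp}v_*)\,d\mu(v_*)\,d\xi(E),
$$
and set $h_t := d\nu_t/d\mu$. By reversibility, $\partial_t h_t = \mathcal{L} h_t$, and using invariance $\int \mathcal{L}h_t\,d\mu = 0$ gives $\frac{d}{dt} D(\nu_t\|\mu) = \int (Q h_t) \log h_t\, d\mu - D(\nu_t\|\mu)$. For $\xi$-a.e.~$E$, the splitting $\mu = \mu_E \otimes \mu_{E^\perp}$ identifies the inner $\mu$-integral in $Q h_t(v)$ with $\bar h_E(v) := \int h_t(P_E v + y)\,d\mu_{E^\perp}(y)$, which as a function of $P_E v$ is $d(\nu_t)_E/d\mu_E$ lifted to $\mathbb{R}^n$. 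Jensen's inequality in the $E^\perp$ variable then gives $\int \bar h_E \log h_t\, d\mu \leq \int \bar h_E \log \bar h_E\, d\mu_E = D((\nu_t)_E \| \mu_E)$; integrating over $\xi$ and invoking \eqref{eq:BLmuSplits} produces $\int (Q h_t)\log h_t\,d\mu \leq (1-\lambda) D(\nu_t\|\mu)$. Hence $\frac{d}{dt} D(\nu_t\|\mu) \leq -\lambda D(\nu_t\|\mu)$, and \eqref{eq:EntropyDecay} follows from Gr\"onwall.

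For the variance bound, test \eqref{eq:BLmuSplits} against $\nu = (1+\epsilon f)\mu$ for bounded $f$ satisfying $\int f\,d\mu = 0$ and $\epsilon$ small enough that $\nu$ is a probability measure. Expanding $x\log x = (x-1) + \frac{1}{2}(x-1)^2 + O((x-1)^3)$, dividing by $\epsilon^2/2$, and sending $\epsilon \downarrow 0$ produces the $L^2$ counterpart
$$
\int_{\mathbf{E}} \ovar_{\mu_E}(\bar f_E)\,d\xi(E) \leq (1-\lambda)\,\ovar_\mu(f),
$$
where $\bar f_E(v) := \int f(P_E v + y)\,d\mu_{E^\perp}(y)$. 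A direct computation using the splitting of $\mu$ and the identity $\int f\, \bar f_E\,d\mu = \int \bar f_E^2\,d\mu_E$ shows that the Dirichlet form is $\mathcal{E}(f) := -\int f\,\mathcal{L}f\,d\mu = \ovar_\mu(f) - \int_{\mathbf{E}} \ovar_{\mu_E}(\bar f_E)\,d\xi(E)$, so the $L^2$ inequality gives $\mathcal{E}(f) \geq \lambda\,\ovar_\mu(f)$. Reversibility yields $\frac{d}{dt}\ovar_\mu(P_t f) = -2\mathcal{E}(P_t f) \leq -2\lambda\,\ovar_\mu(P_t f)$, and Gr\"onwall produces \eqref{eq:specGapIneq}.

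The main technical subtleties are (i) justifying differentiation under the integral and the Jensen step in the entropy computation when $h_t$ is unbounded above or vanishing, which is addressed by truncating $h_0$ away from $0$ and $\infty$ and appealing to weak lower semicontinuity of entropy (the assertion being trivial if $D(\nu_0\|\mu) = \infty$); and (ii) pushing the $\epsilon$-expansion through the $\xi$-integral and extending the resulting $L^2$ inequality from bounded mean-zero $f$ to all of $L^2(\mu)$, handled by dominated convergence and density.
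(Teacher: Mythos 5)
Your proposal is correct, and the variance half coincides with the paper's: both linearize \eqref{eq:BLmuSplits} around $\mu$ to get $\int_{\mathbf{E}} \ovar(\EE[f(X)\mid P_EX])\,d\xi(E)\leq(1-\lambda)\ovar_\mu(f)$; the paper then identifies the resulting Efron--Stein-type bound with a Poincar\'e inequality and cites the standard equivalence with exponential $L^2$ decay, whereas you prove that equivalence directly via the identity $\mathcal{E}(f)=\ovar_\mu(f)-\int_{\mathbf{E}}\ovar_{\mu_E}(\bar f_E)\,d\xi(E)$ (which is right, since the splitting of $\mu$ gives $\int f\,\bar f_E\,d\mu=\int\bar f_E^2\,d\mu_E$) and Gr\"onwall. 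The entropy half genuinely differs in execution. The paper avoids differentiating $t\mapsto\int h_t\log h_t\,d\mu$ altogether: it writes $\nu_t$ as the explicit Poisson mixture $e^{-t}\nu_0+te^{-t}\int_{\mathbf{E}}\nu_{0E}\otimes\mu_{E^\perp}\,d\xi(E)+(1-(1+t)e^{-t})\tilde\nu_t$, applies convexity of relative entropy, bounds the one-collision term by \eqref{eq:BLmuSplits} (noting $D(\nu_{0E}\otimes\mu_{E^\perp}\|\mu)=D(\nu_{0E}\|\mu_E)$ by the splitting) and the multi-collision term by data processing, and thereby controls the upper Dini derivative with no regularity assumptions on $h_t$. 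Your route is the infinitesimal version of the same estimate: your Jensen step $\int\bar h_E\log h_t\,d\mu\leq D((\nu_t)_E\|\mu_E)$ is valid and plays exactly the role of the paper's convexity step. What the paper's finite-$t$ mixture buys is that the delicate points you flag in (i) --- differentiation under the integral and the behavior of $\log h_t$ where $h_t$ vanishes or blows up --- never arise; your truncation-plus-lower-semicontinuity remedy is plausible and standard for bounded jump generators, but it is precisely the work the paper's argument is engineered to avoid. Both routes yield the same differential inequality $\frac{d}{dt}D(\nu_t\|\mu)\leq-\lambda D(\nu_t\|\mu)$ and hence the same conclusion.
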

The theorem is stated in such a way that no moment assumptions on $\mu$ are imposed.  However, in view of Proposition \ref{prop:EquilibriumSplits}, the physical interpretation of Theorem \ref{thm:LSI_PI} is that it governs  convergence toward equilibrium for the process   that results from placing an excited particle in contact with a finite-temperature equilibrated bath.   In view of this interpretation, if the best choice of $\lambda$ in \eqref{eq:XiFrame} is $\lambda = 0$, then our particles do not interact in some direction, and convergence to equilibrium cannot be guaranteed.  However, if $0 < \lambda \leq 1$, then \eqref{eq:EntropyDecay} implies a strict trend to  equilibrium  $\mu$.   The  respective  convergence rates of $\lambda$ and $2\lambda$   in \eqref{eq:EntropyDecay} and \eqref{eq:specGapIneq} are sharp; an explanation follows the proof.

 \begin{remark}
The entropy-decay estimate \eqref{eq:EntropyDecay} is equivalent to a log-Sobolev inequality with constant $2 / \lambda$ \cite[Theorem 5.2.1]{BGL}, and the variance estimate \eqref{eq:specGapIneq}  is equivalent to  a Poincar\'e  inequality with constant $1/\lambda$ \cite[Theorem 4.2.5]{BGL}.   
\end{remark}

 \begin{proof}[Proof of Theorem \ref{thm:LSI_PI}]
We'll prove the entropy decay estimate \eqref{eq:EntropyDecay} first; we can assume without loss of generality that $D(\nu_0\|\mu) < \infty$, else there is nothing to prove.  Toward this end, note that the Poisson timing of the collision process implies
\begin{align}
\nu_t = e^{-t} \nu_0 + t e^{-t} \int_{\mathbf{E}} \nu_{0 E}\otimes \mu_{E^{\perp}}d\xi(E) + (1- (1+t)e^{-t}) \tilde{\nu}_t, \label{eq:PoissonMixture}
\end{align}
where $\tilde{\nu}_t \in P(\mathbb{R}^n)$   is the conditional law of the excited particle's velocity at time $t$, given that two or more collisions occurred before time $t$. Thus, for all $t\geq 0$ we have
\begin{align*}
& D(\nu_t\|\mu) - D(\nu_0\|\mu) \\
&\leq  (e^{-t}-1) D(\nu_0\|\mu) + t e^{-t} \int_{\mathbf{E}} D(\nu_{0E}\|\mu_E)  d\xi(E) + (1- (1+t)e^{-t}) D(\tilde{\nu}_t\|\mu)   \\
&\leq (e^{-t}-1) D(\nu_0\|\mu) + t e^{-t} (1-\lambda) D(\nu_{0}\|\mu)   + (1- (1+t)e^{-t}) D( {\nu}_0\|\mu)   \\
&=  -t e^{-t}\lambda  D(\nu_0\|\mu),
\end{align*}
where the first inequality  follows by convexity of relative entropy applied to the mixture  \eqref{eq:PoissonMixture}, and the second inequality follows from an application of \eqref{thm:BLForSplitMu} and the data processing inequality for relative entropy which implies $D(\tilde{\nu}_t\|\mu)\leq D(\nu_0 \| \mu)$ (this uses the fact that $\mu$ is invariant under our dynamics, so that if $\nu_0 = \mu$, then $\tilde{\nu}_t = \mu$).  

Hence, from the above and the semigroup property, we have the estimate 
\begin{align*}
 \frac{d}{dt}^+ D(\nu_t\|\mu) \leq  -\lambda  D(\nu_t\|\mu), ~~~t\geq 0.
\end{align*}
An application of  Gr\"onwall's lemma gives \eqref{eq:EntropyDecay}.

Inequality \eqref{eq:EntropyDecay} can be linearized to obtain the variance decay inequality \eqref{eq:specGapIneq}, albeit with the suboptimal rate $\lambda$ in the exponent.  The improved rate of $2\lambda$ will be obtained by linearizing \eqref{eq:BLmuSplits} directly.  Toward this end, recall that the relative entropy of $P\ll Q$ can be written as
$$
D(P\|Q) = \int \frac{dP}{dQ}\log\left(\frac{dP}{dQ} \right)dQ.
$$
Therefore, if $P$ is a perturbation of $Q$ in the sense that $dP = (1+\epsilon f) dQ$ for a bounded measurable function $f$ and $\epsilon$ sufficiently small, then   Taylor expansion of $x\in \mathbb{R}^+ \mapsto x \log x$ about $x=1$ gives the local behavior of relative entropy
$$
D(P\|Q) = \frac{\epsilon^2}{2} \ovar_Q(f) +o(\epsilon^2),
$$
where the first-order term is absent since $f$ necessarily satisfies $\int f dQ = 0$ for $P$ to be a probability measure.  By Taylor's theorem, we remark that the little-$o$ term has modulus at most 
$$
\frac{(\epsilon \|f\|_{\infty})^3 }{6 (1-\epsilon \|f\|_{\infty})^2}.
$$
So, consider a bounded measurable  function $f: \mathbb{R}^n\to \mathbb{R}$ satisfying $\int f d\mu= 0$ and define $d\mu^{\epsilon}:= (1+\epsilon f) d\mu$, which is a valid probability measure for all $\epsilon$ sufficiently small.  Let $X\sim \mu$.  For $E\in \mathbf{E}$, definitions imply 
$$
d\mu^{\epsilon}_E = (1 + \epsilon \EE[f(X) |P_E X]) d\mu_E, 
$$
where $\EE[f(X) |P_E X]$ is the conditional expectation of $f(X)$ under $\mu$ with respect to the $\sigma$-algebra generated by $P_E X$.  
Thus, by linearization and \eqref{eq:BLmuSplits}, we have
\begin{align*}
\frac{\epsilon^2}{2} \int_{\mathbf{E}} \ovar( \EE[f(X)|P_E X] )d \xi(E) + o(\epsilon^2) &= 
\int_{\mathbf{E}}  D\left( \mu^{\epsilon}_E  \big\| \mu_E \right)d \xi(E) \\ 
&\leq   (1-\lambda) D(\mu^{\epsilon} \| \mu) \\
&= (1-\lambda) \frac{\epsilon^2}{2}\ovar( f(X)  )+ o(\epsilon^2),
\end{align*}
where all variances and expectations are with respect to $\mu$.
Dividing through by  $\epsilon^2$ and letting $\epsilon$ vanish establishes the inequality 
\begin{align}
 \int_{\mathbf{E}} \ovar( \EE[f(X)|P_E X] )d \xi(E) \leq  (1-\lambda) \ovar( f(X)  ) \label{eq:fVarDrop}
\end{align}
for all bounded measurable $f$ with $\int f d\mu = 0$.  Inequality \eqref{eq:fVarDrop} is invariant to replacing $f$ by $f + c$ for $c\in \mathbb{R}$, so the zero-mean assumption can be eliminated.  Thus, a standard density argument allows us to conclude the same holds for all $f\in L^2(\mu)$.   Finally, using the classical variance decomposition 
$$\ovar( f(X) )  =  \EE[\ovar(f(X)|P_E X)] +   \ovar(\EE[f(X)|P_E X]),$$
 we may rewrite \eqref{eq:fVarDrop} in the form 
\begin{align}
\ovar_{\mu}( f  )  = \ovar( f(X)  )  &\leq \frac{1}{\lambda}  \int_{\mathbf{E}} \EE[ \ovar( f(X)|P_E X  )] d \xi(E) \label{eq:fVarDropEfronStein}\\
&=\frac{1}{2 \lambda} \int_{\mathbb{R}^n}  \left( \mathcal{L}(f^2) -   2 f \mathcal{L}(  f) \right) d\mu. \notag
\end{align}
By \cite[Theorem 4.2.5]{BGL}, this is equivalent to \eqref{eq:specGapIneq}.
 \end{proof}

With the proof complete, we now address  optimality of the rates of convergence in \eqref{eq:EntropyDecay} and \eqref{eq:specGapIneq}.  To start, let  $\xi\in P(\mathbf{E})$ and $\lambda\geq 0$ satisfy 
\begin{align}
  \int_{\mathbf{E}}    P_E   d\xi(E)  = (1-\lambda) \id_{\mathbb{R}^n}. \label{frameEqual}
\end{align}
Now, take $\mu = \gamma$, and $\nu_0 = N(\theta, \id_{\mathbb{R}^n})$ for fixed $\theta\in \mathbb{R}^n$.  By construction, $\nu_t$ is the mixture  
\begin{align*}
\nu_t &= \sum_{k\geq 0} \frac{t^k e^{-t}}{k!}   \int_{\mathbf{E}^k}  N(P_{E_1}  \cdots P_{E_k} \theta, \id_{\mathbb{R}^n}) d\xi^{\otimes k}(E_1,\dots, E_k) . 
\end{align*}
So, using the  Donsker--Varadhan variational formula for entropy in combination with the identity (due to \eqref{frameEqual})
$$
 \int_{\mathbf{E}^k}  \|P_{E_1} P_{E_2} \dots P_{E_k}\theta\|^2 d\xi^{\otimes k}(E_1, E_2,\dots, E_k) = (1-\lambda)^k \|\theta\|^2,~~k\geq 1,
$$
we have  for any $\beta >1$, the crude lower bound
\begin{align*}
D(\nu_t \|\gamma) &\geq \int_{\mathbb{R}^n} \frac{1}{2\beta}|x|^2 d\nu_t(x) - \log \left( \int_{\mathbb{R}^n}  e^{\frac{|x|^2}{2\beta}} d\gamma(x) \right) \\
&= \frac{n}{2\beta} - \frac{n}{2}\log\left( \frac{\beta}{\beta-1}\right)  + \frac{1}{\beta} e^{-\lambda t} \underbrace{D(\nu_0 \|\gamma)}_{\frac{1}{2}\|\theta\|^2}.
\end{align*}
Hence, for any choice of  $t_0 \geq 0$ and $\epsilon >0$, we can take $\beta$ sufficiently close to 1 and $\theta$ with sufficiently large norm such that 
$$
D(\nu_{t_0} \|\gamma) \geq  (1-\epsilon)  e^{-\lambda t_0} D(\nu_0 \|\gamma).
$$
By an application of \eqref{eq:EntropyDecay} and the semigroup property, this means we must have 
$$
D(\nu_{t} \|\gamma) \geq  (1-\epsilon)  e^{-\lambda t} D(\nu_0 \|\gamma), ~~~\forall t \in [0, t_0].
$$
It follows that the rate $\lambda$ in \eqref{eq:EntropyDecay} cannot be improved when $\mu=\gamma$.  Using the transport map construction in the proof of Theorem \ref{thm:BLForSplitMu}, this example with  reference measure equal to $\gamma$ can be transported to any reference measure $\mu$ that splits with respect to $\xi$.  Therefore, the rate $\lambda$ cannot be improved in general.

To see that the rate in \eqref{eq:specGapIneq} is sharp, assume $\mu$ splits with respect to $\xi$, and assume for simplicity that it has finite second moments.  Put $f(x) = u\cdot x$, where $u$ is an eigenvector of $\int P_E d\xi(E)$ with eigenvalue $1-\lambda$.  That is, 
\begin{align}
\left(  \int_{\mathbf{E}}    P_E   d\xi(E) \right) u = (1-\lambda) u. \label{eq:uEigenVector}
\end{align}
For   $X\sim \mu$, we have
 \begin{align*}
  \int_{\mathbf{E}} \ovar( \EE[f(X)|P_E X] )d \xi(E) &= \int_{\mathbf{E}} \EE \| u \cdot P_E X\|^2 d\xi(E)\\
  &=  \int _{\mathbf{E}} u^T P_E \operatorname{Cov}(\mu) P_E u \, d\xi(E) \\
  &= u^T   \operatorname{Cov}(\mu)  \left( \int _{\mathbf{E}}  P_E d\xi(E)  \right) u \\
  &=  (1-\lambda) u^T   \operatorname{Cov}(\mu)   u \\
  &=  (1-\lambda) \ovar( f(X) ) ,
\end{align*}
where we used that $P_E$ and $\operatorname{Cov}(\mu)$ commute for $\xi$-a.e.~$E\in \mathbf{E}$ (since $\mu$ splits with respect to $\xi$), and \eqref{eq:uEigenVector}.   Hence, the constant $(1-\lambda)$ is best-possible in the inequality \eqref{eq:fVarDrop}, and this is equivalent to \eqref{eq:specGapIneq}.

\subsubsection*{Remarks on related literature}

The physical model proposed in this Section can be regarded as spatially homogeneous linear Boltzmann dynamics, where the collision kernel is replaced by the random choice of subspace dictated by the distribution $\xi$.  In this respect, the interactions considered here are more similar in spirit to Kac's simplified model \cite{Kac} than the full   theory of the linear Boltzmann equation.  The Brascamp--Lieb inequalities on the sphere  have been interpreted as governing convergence to equilibrium in Kac's model \cite{carlen2004sharp}, but we are unaware of any previous kinetic interpretation of the Euclidean Brascamp--Lieb inequalities.   In a related spirit, there is a line of recent work by P.~Caputo and collaborators that show rapid mixing of certain Markov processes is closely connected to validity of approximate Shearer-type inequalities for entropy  (see, e.g., \cite{CaputoMenzParisi, BlancaEtAl, BristielCaputo, CaputoParisi, CaputoSinclair}).  It is an interesting question as to whether these results fit within the context of (yet-to-be-explored) Brascamp--Lieb-type inequalities on the discrete spaces they consider.

\subsection{Inequalities in probability}

It's well-known that the Brascamp--Lieb inequalities \eqref{eq:BL} imply many classical analytic and geometric inequalities (e.g., the H\"older, sharp Young, and Loomis--Whitney inequalities).  None of these applications  require Valdimarsson's characterization of extremizers.  However, by incorporating the latter circle of ideas into the picture through their manifestation in Theorem \ref{thm:splittingwrtxi}, we eventually arrived at the  probabilistic inequalities \eqref{eq:fVarDrop} and \eqref{eq:fVarDropEfronStein}, which we summarize below.   In this Section, we'll explain how they generalize a variety of  familiar inequalities in probability. 
\begin{theorem}\label{thm:LinearizedBL}
Let $\lambda\geq 0$ be such that  \eqref{eq:XiFrame} holds, and let $\mu$ split with respect to $\xi$.  If $X\sim \mu$, then for all $f\in L^2(\mu)$ 
\begin{align}
 \int_{\mathbf{E}} \ovar( \EE[f(X)|P_E X] )d \xi(E) \leq  (1-\lambda) \ovar( f(X)  ) \label{eq:fVarDrop2}
\end{align}
and, equivalently, 
\begin{align}
 \ovar( f(X)  )  &\leq \frac{1}{\lambda}  \int_{\mathbf{E}} \EE[ \ovar( f(X)|P_E X  )] d \xi(E). \label{eq:fVarDropEfronStein2}
 \end{align}
 \end{theorem}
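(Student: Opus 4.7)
The plan is to obtain Theorem \ref{thm:LinearizedBL} by linearizing the Brascamp--Lieb-type entropy inequality \eqref{eq:BLmuSplits} of Theorem \ref{thm:BLForSplitMu} around the reference measure $\mu$. This is precisely the derivation already performed inside the proof of Theorem \ref{thm:LSI_PI}, so the proof can essentially be extracted and repackaged here. The key ingredient is the local quadratic behavior of relative entropy: for a bounded measurable $f$ with $\int f\,d\mu = 0$, the perturbation $d\mu^{\epsilon} := (1+\epsilon f)\,d\mu$ is a probability measure for all $|\epsilon|$ small, and
$$
D(\mu^{\epsilon}\|\mu) = \tfrac{\epsilon^{2}}{2}\,\ovar_{\mu}(f) + o(\epsilon^{2}).
$$

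Next, I would compute the induced perturbation on marginals. For $X\sim \mu$, a direct application of the definition of conditional expectation shows $d\mu^{\epsilon}_{E} = (1 + \epsilon\,\EE[f(X)\mid P_{E}X])\,d\mu_{E}$, and $\EE[f(X)\mid P_{E}X]$ is bounded (by $\|f\|_{\infty}$) with mean zero under $\mu_E$. Applying the same entropy expansion to each marginal and using the uniform bound on the remainder $(\epsilon\|f\|_{\infty})^{3}/(6(1-\epsilon\|f\|_{\infty})^{2})$ to invoke dominated convergence in $E$, I can substitute into \eqref{eq:BLmuSplits} with $\nu\leftarrow \mu^{\epsilon}$, divide through by $\epsilon^{2}/2$, and let $\epsilon\to 0$ to arrive at \eqref{eq:fVarDrop2} for bounded zero-mean $f$.

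The zero-mean hypothesis is then removed by noting that both sides of \eqref{eq:fVarDrop2} are unchanged under $f\mapsto f+c$, and extension to arbitrary $f\in L^{2}(\mu)$ follows by a standard density argument (bounded functions are dense in $L^{2}(\mu)$, and both sides are continuous in $L^{2}(\mu)$ by conditional Jensen and the contractivity of conditional expectation on $L^{2}$). Finally, the equivalence of \eqref{eq:fVarDrop2} and \eqref{eq:fVarDropEfronStein2} is a one-line application of the classical conditional variance decomposition
$$
\ovar(f(X)) = \EE[\ovar(f(X)\mid P_{E}X)] + \ovar(\EE[f(X)\mid P_{E}X]),
$$
integrated against $\xi$ and rearranged.

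The main potential obstacle is purely technical: controlling the little-$o$ term uniformly in $E$ when integrating against $\xi$, to justify exchanging the limit $\epsilon\to 0$ with the integral over $\mathbf{E}$. Since the remainder is bounded by a quantity depending only on $\|f\|_{\infty}$ (not on $E$), the uniformity is automatic and dominated convergence applies without issue. The measurability of $E\mapsto \ovar(\EE[f(X)\mid P_{E}X])$ needed to make sense of the left-hand side follows from the same Borel-measurability considerations that justified the mixture representation \eqref{eq:MuIsMixture0} (cf.\ the second remark after Theorem \ref{thm:splittingwrtxi}).
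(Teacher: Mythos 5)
Your proposal is correct and follows essentially the same route as the paper: the author derives \eqref{eq:fVarDrop2} by exactly this linearization of \eqref{eq:BLmuSplits} (perturbation $d\mu^{\epsilon}=(1+\epsilon f)\,d\mu$, quadratic expansion of relative entropy with the uniform cubic remainder bound, removal of the zero-mean hypothesis, and an $L^2$ density argument) inside the proof of Theorem \ref{thm:LSI_PI}, and obtains \eqref{eq:fVarDropEfronStein2} from the same conditional variance decomposition. No gaps; your added remarks on dominated convergence and measurability in $E$ are consistent with what the paper implicitly assumes.
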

Since \eqref{eq:fVarDrop2} and \eqref{eq:fVarDropEfronStein2} follow from linearizing the Brascamp--Lieb-type inequalities \eqref{eq:BLmuSplits}, we find it appropriate to refer to them as \emph{linearized Brascamp--Lieb inequalities}.  They were described by the author in the extended abstract \cite{CourtadeIZS24}, along with the following few examples which we repeat here (see also \cite{Courtade21}, in the context of linearizing Shearer's inequality).

Since the standard Gaussian measure $\gamma$ splits with respect to any $\xi\in P(\mathbf{E})$, we immediately obtain a family of  variance inequalities for $\gamma$. %
\begin{example}
If $X\sim \gamma$, then  \eqref{eq:fVarDrop2} and   \eqref{eq:fVarDropEfronStein2} hold for every $\xi \in P(\mathbf{E})$ and $\lambda\geq 0$ satisfying \eqref{eq:XiFrame}.
\end{example}

In general, if one starts with a  random vector $X$ with given independence structure, it may be possible to construct a measure $\xi\in P(\mathbf{E})$ such that the hypotheses of Theorem \ref{thm:LinearizedBL} are satisfied.  This approach handily recovers an assortment of classical $L^2$ inequalities, as demonstrated in the following few examples.

\begin{example}(Efron--Stein inequality \cite{EfronStein, steele})
Let $X = (X_i)_{i=1}^k$ be a random vector with independent components $(X_i)_{i=1}^k$, and define 
$$X^{(i)} =   (X_1, \dots, X_{i-1}, X_{i+1}, \dots X_k).$$
  For any measurable $f$ with $\ovar(f(X))<\infty$,
\begin{align}
 \ovar(f(X)) \leq \sum_{i=1}^k   \EE[ \ovar(f(X)|X^{(i)} )] . \label{eq:EfronStein}
\end{align}
\end{example}
\begin{proof}
We can assume $X$ takes values in $\mathbb{R}^n$, and choose  $E_i$  such that $X_i$ is the component of $X$ in $E_i^{\perp}$.  This implies the orthogonal decomposition $\mathbb{R}^n = {\orthplus}_{1\leq i \leq k } E^{\perp}_i$, which yields  
$$
\frac{1}{k-1} \sum_{i=1}^k P_{E_i} = \operatorname{id}_{\mathbb{R}^n}.
$$
By the independence hypothesis, the law of $X$ splits along $(E_i, E_i^{\perp})$ for each $i=1,\dots, k$, and therefore \eqref{eq:EfronStein} follows from \eqref{eq:fVarDropEfronStein2} by taking $\xi$ equal to the uniform distribution on   $\{E_1, \dots, E_k\}\subset \mathbf{E}$. 
\end{proof}
We remark that the Efron--Stein inequality is often interpreted as a spectral gap inequality for the Gibbs sampling procedure where coordinates of a vector $X$ are resampled, uniformly at random.  From this perspective, \eqref{eq:fVarDropEfronStein2} may be thought of as the same, except where the component of  $X$ lying in subspace $E^{\perp}$ is resampled according to the distribution $E\sim \xi$.

\begin{example}(Dembo--Kagan--Shepp inequality \cite{dembo2001remarks}) Let $(X_i)_{i\geq 1}$ be a sequence of i.i.d.~random vectors, and define the cumulative sum $S_n = \sum_{j=1}^n X_j$.  For any measurable $g$, 
\begin{align}
\ovar(\EE[g(S_n)|S_m]) \leq \frac{m}{n} \ovar(g(S_n)), ~~~~n\geq m\geq 1. \label{eq:DKS}
\end{align}
\end{example} 
\begin{proof}
For simplicity of notation, we'll assume each $X_i$ is one-dimensional.   Consider the random vector $X = (X_1, \dots, X_n)$ taking values in  $\mathbb{R}^n$, with $X_j$ the projection of $X$ along natural basis vector $e_j$, $j=1,\dots, n$. Take $(E_i)_{i=1}^k$ be an enumeration of all $k = {n \choose m}$ subspaces of $\mathbb{R}^n$, each equal to the linear span of exactly $m$ natural basis vectors.  By construction, the law of $X$ splits along $(E_i, E_i^{\perp})$ for each $i=1,\dots,k$, and 
$$
 \sum_{i=1}^k P_{E_i} \xi(\{E_i\})= \frac{m}{n}\operatorname{id}_{\mathbb{R}^n}.
$$
for $\xi$ the uniform measure on the discrete set $\{E_1, \dots, E_k\}$.   By symmetry, $\EE[ g(S_n) | P_{E_i} X]$ are equal in law for each $i=1, \dots, k$.  So, an application of \eqref{eq:fVarDrop2} with $f(X) = g(S_n)$ gives 
\begin{align*}
\ovar(\EE[g(S_n)|X_1, \dots, X_m]) \leq \frac{m}{n} \ovar(g(S_n)), ~~~~n\geq m\geq 1.  
\end{align*}
The claim   follows since $S_m$ is a sufficient statistic of $(X_1, \dots, X_m)$ for $S_n$. 
\end{proof}
 \begin{remark}
 In the special case where $(X_i)_{i\geq 1}$ are i.i.d.~$\gamma$, inequality \eqref{eq:DKS} yields the  optimal  estimate for contraction of the Ornstein--Uhlenbeck semigroup in $L^2(\gamma)$.  This is equivalent to the Gaussian Poincar\'e inequality.
 \end{remark}
 
 A straightforward Corollary of Theorem  Theorem \ref{thm:LinearizedBL} is the following improvement of Jensen's inequality by a factor of $(1-\lambda)$, under the structural assumption that $\mu$ splits with respect to $\xi$.
\begin{corollary} 
Let $\mu$ split with respect to $\xi$, and  $\lambda\geq 0$ satisfy  \eqref{eq:XiFrame}.    If $(\psi_E)_{E\in \mathbf{E}}$ is a collection of functions from $\mathbb{R}^n$ to $\mathbb{R}$ such that 
$$(E,x)\in \mathbf{E}\times \mathbb{R}^n  \mapsto (\psi_E \circ P_E)(x) \in \mathbb{R}$$
 is   measurable, then 
 \begin{align}
\ovar_{\mu}\left( \int_{\mathbf{E}} (\psi_E\circ P_E)  d\xi(E)  \right)  \leq  (1-\lambda)  \int_{\mathbf{E}}  \ovar_{\mu}\left(  \psi_E\circ P_E   \right) d\xi(E) .\label{eq:BLvarianceDrop}
 \end{align}
\end{corollary}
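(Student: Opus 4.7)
The plan is to deduce \eqref{eq:BLvarianceDrop} from the linearized Brascamp--Lieb inequality \eqref{eq:fVarDrop2} by a short duality (Cauchy--Schwarz) argument.  First, I would reduce to the case where each $\psi_E \circ P_E$ is centered under $\mu$, by replacing $\psi_E$ with $\psi_E - \EE_{\mu}[\psi_E(P_E X)]$; this substitution leaves both sides of \eqref{eq:BLvarianceDrop} unchanged and ensures that $f := \int_{\mathbf{E}} (\psi_E \circ P_E)\,d\xi(E)$ satisfies $\EE_{\mu}[f] = 0$.  One may also assume $\int_{\mathbf{E}} \ovar_{\mu}(\psi_E \circ P_E)\,d\xi(E) < \infty$, since otherwise the inequality is vacuous; finiteness of this quantity justifies all Fubini manipulations to follow.

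The key observation is that $\psi_E(P_E X)$ is $\sigma(P_E X)$-measurable for each $E$, so Fubini's theorem together with the tower property gives
\begin{align*}
\ovar_{\mu}(f) = \EE_{\mu}[f(X)^2] &= \int_{\mathbf{E}} \EE_{\mu}\!\left[ f(X)\, \psi_E(P_E X) \right] d\xi(E) \\
&= \int_{\mathbf{E}} \EE_{\mu}\!\left[ \EE_{\mu}[f(X) \mid P_E X]\, \psi_E(P_E X) \right] d\xi(E).
\end{align*}
Applying Cauchy--Schwarz in $L^2(\mu)$ pointwise in $E$, and then Cauchy--Schwarz in $L^2(\xi)$, yields
\begin{align*}
\ovar_{\mu}(f) \leq \left( \int_{\mathbf{E}} \ovar_{\mu}(\EE_{\mu}[f(X) \mid P_E X])\, d\xi(E) \right)^{1/2} \left( \int_{\mathbf{E}} \ovar_{\mu}(\psi_E \circ P_E)\, d\xi(E) \right)^{1/2},
\end{align*}
where the centering step is used to identify $L^2(\mu)$-norms with variances.

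The first factor is exactly the quantity controlled by the linearized Brascamp--Lieb inequality \eqref{eq:fVarDrop2} of Theorem \ref{thm:LinearizedBL}, which gives
\[
\int_{\mathbf{E}} \ovar_{\mu}(\EE_{\mu}[f(X) \mid P_E X])\, d\xi(E) \leq (1-\lambda)\, \ovar_{\mu}(f).
\]
Substituting this bound and, in the case $\ovar_{\mu}(f) > 0$, dividing through by $\sqrt{\ovar_{\mu}(f)}$ and squaring yields \eqref{eq:BLvarianceDrop}.  The case $\ovar_{\mu}(f) = 0$ is trivial, since the right-hand side of \eqref{eq:BLvarianceDrop} is non-negative.

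The main obstacle is spotting the right duality move: writing one copy of $f$ as the $\xi$-integral against $(\psi_E \circ P_E)$ and using the $\sigma(P_E X)$-measurability of each $\psi_E \circ P_E$ to introduce a conditional expectation into the inner product.  Once this is done, the corollary collapses in a single step to \eqref{eq:fVarDrop2}; no additional structural input from Theorem \ref{thm:splittingwrtxi} beyond what is already packaged in Theorem \ref{thm:LinearizedBL} is needed.
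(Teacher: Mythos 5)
Your proof is correct and follows essentially the same route as the paper: the paper likewise writes $\ovar_\mu(f)$ as $\int_{\mathbf{E}}\Cov(f(X),(\psi_E\circ P_E)(X))\,d\xi(E)$, replaces $f(X)$ by $\EE[f(X)\mid P_EX]$ inside the covariance, applies Cauchy--Schwarz twice, and invokes \eqref{eq:fVarDrop2} before dividing by $\ovar_\mu(f)^{1/2}$. The only cosmetic difference is that the paper works with covariances directly rather than centering the $\psi_E$ first.
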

\begin{proof}  Put $f(x) := \int_{\mathbf{E}} (\psi_E\circ P_E)( x) d\xi(E)$, and let $X\sim \mu$.  Applying Cauchy-Schwarz twice followed by \eqref{eq:fVarDrop2}, we have
\begin{align*}
&\ovar(f(X)) \\
&=%
\int_{\mathbf{E}} \operatorname{Cov}( f(X) , (\psi_E\circ P_E)(X) ) d\xi(E)  \\
 &= 
\int_{\mathbf{E}}  \operatorname{Cov}( \EE[f(X)|P_E X] , (\psi_E\circ P_E)(X) )  d\xi(E) \\
 &\leq    \int_{\mathbf{E}}  \ovar \left( \EE[f(X)|P_E X]  \right)^{1/2} \ovar \left((\psi_E\circ P_E)(X)  \right)^{1/2}     d\xi(E) \\
 &\leq \left(   \int_{\mathbf{E}}  \ovar \left(  \EE[ f(X)|P_E X]  \right)  d\xi(E)  \right)^{1/2}  \left(    \int_{\mathbf{E}}  \ovar \left( (\psi_E\circ P_E)(X) \right)  d\xi(E)  \right)^{1/2}  \\
 &\leq   (1-\lambda)^{1/2} \ovar\left( f(X)  \right)^{1/2} \left(    \int_{\mathbf{E}}  \ovar \left( (\psi_E\circ P_E)(X) \right)  d\xi(E)  \right)^{1/2} .
\end{align*} 
\end{proof}

As an example,  we recover an  inequality  due to Madiman and Barron \cite{madiman2007generalized}, which is itself   a generalization of a classical result on $U$-statistics due to Hoeffding \cite{hoeffding1948class}.    To state it, recall that $\mathcal{T}\subset 2^{[n]}$ is called an $r$-cover of $[n]:=\{1,\dots, n\}$ if each element of $[n]$ is contained in at most $r$ members of $\mathcal{T}$. 
\begin{example}[Madiman--Barron   inequality] Let $X = (X_m)_{m=1}^n$ be a collection of $n$ independent random random variables,  let $(S_i)_{i=1}^k \subset 2^{[n]}$  be an $r$-cover of $[n]$, and define $X_{S_i} := (X_m)_{m\in S_i}$.  For any real-valued measurable $\psi_i : X_{S_i} \mapsto \psi_i(X_{S_i})$, 
 \begin{align}
\ovar\left( \sum_{i=1}^k \psi_i(X_{S_i}) \right) \leq r  \sum_{i=1}^k \ovar\left( \psi_i(X_{S_i}) \right).\label{eq:MBvarianceDrop}
 \end{align}
\end{example}
\begin{proof}
Let $E_i = \operatorname{span}\{ (e_j)_{j\in S_i}\}$, where $e_j$ is the $j$th natural basis vector of $\mathbb{R}^n$.  The random vector $X = (X_1, \dots, X_n)$ takes values in $\mathbb{R}^n$, and has law which splits along $(E_i, E_i^{\perp})$ for all $i=1,\dots, k$.  Letting $\xi$ be the uniform measure on $\{E_1, \dots, E_k\}$, we have 
$$
 \sum_{i=1}^k P_{E_i} \xi(\{E_i\}) \leq  \frac{r}{k}\operatorname{id}_{\mathbb{R}^n}.
$$
Hence, the claim follows from \eqref{eq:BLvarianceDrop}.
\end{proof}
\begin{remark}
A weighted version of \eqref{eq:MBvarianceDrop} also appears in \cite{madiman2007generalized}, which can be obtained by letting $\xi$ be an arbitrary (non-uniform) distribution on $\{E_1, \dots, E_k\}$.
\end{remark}

The examples presented in this section have many interesting consequences of their own.  For example, applications of the Efron--Stein inequality are ubiquitous in statistics \cite{boucheron13},  and the Dembo--Kagan--Shepp inequality and the Madiman--Barron inequalities give simple proofs \cite{Courtade16, madiman2007generalized} of the monotonicity of entropy along the central limit theorem originally due to Arstein, Ball, Barthe, and Naor \cite{ABBN}, and its generalizations.    Also, \eqref{eq:fVarDrop2} in the special case where $E_{dep}=\{0\}$ was used by the author to establish generalized subadditivity estimates for Poincar\'e constants of convolution measures \cite{Courtade21}.

\section{Proof of Theorem \ref{thm:splittingwrtxi}}\label{sec:mainProof}

The proof of Theorem \ref{thm:splittingwrtxi} proceeds in two  steps.  First, we treat the case where $\xi$ is a discrete measure supported on a finite set of subspaces.  This avoids some of the subtleties that need to be resolved in the second step, where we only assume that $\mu$ splits with respect to a measure $\xi\in P(\mathbf{E})$.

To get started, we prove Proposition  \ref{prop:ChiZeroSet} to establish that the independent decomposition of $\mathbb{R}^n$ with respect to $\xi \in P(\mathbf{E})$ is well defined. 
\begin{proof}[Proof of Proposition \ref{prop:ChiZeroSet}]
Note that $0 \in \chi_{\xi}^{-1}(0)$.  If $\chi_{\xi}^{-1}(0)=\{0\}$, then the claim is trivial.  Hence, we can assume without loss of generality that $\{0\}\subsetneq \chi_{\xi}^{-1}(0)$.  It will suffice to show that $\chi_{\xi}^{-1}(0)$ is the  union of mutually orthogonal   subspaces; uniqueness follows immediately.  Toward this end, define  a binary relation $\sim$ on $V:=\chi_{\xi}^{-1}(0)\setminus\{0\}$ via:
$$
x \sim y ~~~\Leftrightarrow ~~~ \chi(x-y) = 0, ~~x,y\in V.  
$$
The relation $\sim$ is obviously reflexive and symmetric.   Now, note that $x\sim y$ implies
$$
x,y,(x-y) \in (E \cup E^{\perp}),~~~\xi-a.s. 
$$
Since $x,y\in V$ themselves, this is only possible if the vectors $x,y$ are both in $E$, or are both in $E^{\perp}$, $\xi$-a.s.  In other words, for $x,y\in V$, 
$$
x\sim y ~~\Leftrightarrow ~~x,y\in E ~\mbox{or}~x,y\in E^{\perp},~ \xi-a.s.
$$
Since $V$ excludes the zero-vector, $\sim$ is transitive, and is therefore an equivalence relation on $V$.  By similar logic, if $x,y\in V$ and $x\not\sim y$, then $x,y$ are orthogonal in $\mathbb{R}^n$.  As a result, $\sim$ partitions $V$ into at most $n$ equivalence classes, which we enumerate as $V = \cup_{\alpha} V_{\alpha}$, and $V_{\alpha} \perp V_{\alpha'}$ for distinct classes.   To conclude, we define $E_{\alpha} = \{0\}\cup V_{\alpha}$, which is a linear subspace of $\mathbb{R}^n$ by definitions.  \end{proof}

\subsection{Special case of Theorem \ref{thm:splittingwrtxi} when $\xi$ is a discrete measure}

Assume that $\xi$ is a discrete measure, supported on $k$ points $\{\tilde{E}_1, \dots, \tilde{E}_k\}\subset \mathbf{E}$.  With this assumption in place, the assertion that $\mu$ splits with respect to  $\xi$ simplifies to saying that $\mu$ splits along $(\tilde{E}_i , \tilde{E}_i^{\perp})$ for each $i=1,\dots,k$.    Moreover, in reference to the independent decomposition of $\mathbb{R}^n$ with respect to $\xi$, the independent subspaces $(E_{\alpha})_{\alpha}$ are explicitly identified   via 
$$
\cup_{\alpha}E_{\alpha} = \cap_{i=1}^k (\tilde{E}_i \cup \tilde{E}_i^{\perp}).
$$
That is, each independent subspace is of the form $E_{\alpha}= \cap_{i=1}^k \tilde{E}_i^{\alpha_i}$, where $\alpha_i \in \{o,\perp\}$ and $E^o :=E$.   In this context, a preliminary version of Theorem \ref{thm:splittingwrtxi} is the following statement. The goal of this subsection is to prove it.

\begin{theorem}\label{thm:rigidSplit}
 If $\mu\in P(E)$ splits along $(\tilde{E}_i, \tilde{E}_i^{\perp})$ for each $i=1, \dots, k$, then $\mu$ takes the product form
\begin{align}
 \mu = \left( \otimes_{\alpha} \mu_{E_{\alpha}}\right) \otimes \mu_{E_{dep}},  \label{eq:productForm}
 \end{align}
 with $\mu_{E_{dep}}\in P(E_{dep})$ a Gaussian measure. 
 \end{theorem}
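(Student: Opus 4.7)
The plan is to reduce to the case where $\mu$ has a smooth, strictly positive density, then read off both the factorization along the independent decomposition and the Gaussianity on $E_{dep}$ from the Hessian of the negative log-density. The main obstacle will be that the projections $P_{\tilde{E}_i}$ do not commute in general, so iterated splittings cannot directly force the $E_{dep}$-factor to be quadratic; I resolve this with a maximum-modulus argument on a homogeneous cubic form.

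\textbf{Regularization.} Convolve $\mu$ with a small centered Gaussian to form $\mu^s := \mu * \gamma_s$ for $s>0$. Since $\gamma_s$ splits along every pair $(\tilde{E}_i,\tilde{E}_i^\perp)$, convolution preserves the hypotheses, and $\mu^s$ admits a smooth strictly positive density $f_s$. Weak convergence $\mu^s \to \mu$ as $s \downarrow 0$ transfers both the product form and Gaussianity of the $E_{dep}$-factor (Gaussians are closed under weak limits), so it suffices to prove the theorem for each $\mu^s$.

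\textbf{Factorization.} Write $U_s := -\log f_s$. The splitting of $\mu^s$ along $(\tilde{E}_i,\tilde{E}_i^\perp)$ is equivalent to $U_s(x) = A_i(P_{\tilde{E}_i} x) + B_i(P_{\tilde{E}_i^\perp} x)$, and differentiating twice kills the mixed second partial, so $\nabla^2 U_s(x)$ commutes with $P_{\tilde{E}_i}$ for each $i$. Because each $E_\alpha = \cap_i \tilde{E}_i^{\alpha_i}$ is a joint eigenspace of $\{P_{\tilde{E}_i}\}$, any operator commuting with all of them preserves every $E_\alpha$, and by symmetry also $E_{dep} = (\oplus_\alpha E_\alpha)^\perp$. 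Thus $\nabla^2 U_s$ is block-diagonal with respect to the independent decomposition, which integrates to $U_s(x) = \sum_\alpha U_{s,\alpha}(x_\alpha) + U_{s,dep}(x_{dep})$ and yields the product form $\mu^s = \bigl(\bigotimes_\alpha \mu^s_{E_\alpha}\bigr)\otimes \mu^s_{E_{dep}}$.

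\textbf{Gaussianity on $E_{dep}$.} Restricted to $E_{dep}$, the factor $U_{s,dep}$ inherits the splitting relation $U_{s,dep}(y) = A'_i(P_i y) + B'_i(P_i^\perp y)$, where $P_i$ now denotes $P_{\tilde{E}_i}|_{E_{dep}}$. Fix any $y_0 \in E_{dep}$ and consider the symmetric cubic form $F(v) := D^3 U_{s,dep}|_{y_0}(v,v,v)$; taking three derivatives in the splitting relation annihilates mixed contributions and produces $F(v) = F(P_i v) + F(P_i^\perp v)$ on $E_{dep}$ for every $i$. Let $v_*$ maximize $|F|$ on the unit sphere of $E_{dep}$. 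Using the elementary inequality $a^3 + b^3 \leq 1$ whenever $a^2 + b^2 = 1$ and $a,b\geq 0$, with equality only at $(0,1)$ or $(1,0)$, the splitting relation forces $v_* \in \tilde{E}_i \cup \tilde{E}_i^\perp$ for every $i$; but by definition of $E_{dep}$ this intersection is $\{0\}$, contradicting $\|v_*\|=1$ unless $F \equiv 0$. Polarization then gives $D^3 U_{s,dep} \equiv 0$ at every point, so $U_{s,dep}$ is at most quadratic and $\mu^s_{E_{dep}}$ is Gaussian. Letting $s \downarrow 0$ completes the proof.
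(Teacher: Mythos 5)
Your proof is correct, and while the factorization step follows the same path as the paper (splitting $\Rightarrow$ the Hessian of $-\log f_s$ commutes with each $P_{\tilde{E}_i}$ $\Rightarrow$ it preserves each $E_\alpha = \cap_i \tilde{E}_i^{\alpha_i}$ and hence $E_{dep}$ $\Rightarrow$ the log-density separates along the independent decomposition; this is the paper's Lemma \ref{lem:SplittingVariables}, whose "integration" step you assert rather than prove, but it is the classical vanishing-mixed-partials fact), your Gaussianity argument on $E_{dep}$ is genuinely different from the paper's. The paper proceeds by first establishing $\int \|x\|\,d\mu < \infty$ from a tail self-majorization inequality, invoking Polyanskiy--Wu to get linear growth of the score $\nabla \log f$, taking its Fourier transform as a tempered distribution, and arguing it is supported at the origin so that the score is polynomial, hence affine. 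Your extremal argument on the cubic form $F(v) = D^3 U_{s,dep}|_{y_0}(v,v,v)$ replaces all of this: the identity $F(v) = F(P_i v) + F(P_i^\perp v)$ checks out (the cross terms vanish because $P_i^\perp P_i = 0$), the homogeneity bound $|F(P_i v_*)| \leq M\|P_i v_*\|^3$ is valid since $P_{\tilde{E}_i}$ maps $E_{dep}$ into itself (a point you should state explicitly: it holds because $P_{\tilde{E}_i}$ preserves $\oplus_\alpha E_\alpha$ and is self-adjoint), and the strictness in $a^3 + b^3 \leq a^2 + b^2$ pins the maximizer to $\cap_i(\tilde{E}_i \cup \tilde{E}_i^\perp) \cap E_{dep} = \{0\}$, forcing $F \equiv 0$ and, by polarization, $D^3 U_{s,dep} \equiv 0$. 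What your route buys is the complete elimination of the moment and score-growth lemmas (the paper's Lemmas \ref{lem:finiteMoments} and \ref{lem:linearGrowth}) and of the distribution-theoretic machinery: everything is pointwise and algebraic once the density is smooth. What the paper's route buys is robustness of a different kind -- the support-of-the-Fourier-transform argument is the one that transfers directly to Valdimarsson's functional setting -- but as a proof of this theorem your version is cleaner. The regularization and the passage to the limit $s \downarrow 0$ (product form and Gaussianity are both closed under weak limits, allowing degenerate Gaussians) are fine.
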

 
 Theorem \ref{thm:rigidSplit} is a  probabilistic manifestation of Valdimarsson's core argument in \cite{valdimarsson08}.  In our proof, we'll point out the key steps that follow Valdimarsson's analysis. 
 
We first establish the case where there are no nontrivial independent subspaces, which has been stated already as Proposition \ref{prop:GaussianCharacterization}.  We will require the following two lemmas.

\begin{lemma}\label{lem:finiteMoments} Under the assumptions of Proposition \ref{prop:GaussianCharacterization},  $\int_{\mathbb{R}^n} \|x\|d\mu(x) <\infty$. 
\end{lemma}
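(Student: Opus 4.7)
The plan is to combine the independence furnished by the splitting assumption with a quantitative strengthening of the hypothesis on $(E_i)_{i=1}^k$ to obtain super-polynomial tail decay for $\mu$; integrability of $\|x\|$ (indeed, all polynomial moments) will follow immediately. Let $\xi = \tfrac{1}{k}\sum_{i=1}^k \delta_{E_i}$ and $F(t) := \mu(\{x : \|x\| \geq t\})$. The map $\chi_\xi$ is continuous and absolutely homogeneous of degree $1$, and the assumption $\cap_{i=1}^k (E_i \cup E_i^\perp) = \{0\}$ is exactly the statement that $\chi_\xi$ vanishes only at the origin. Compactness of the unit sphere then yields $\theta > 0$ with $\chi_\xi(x) \geq \theta\|x\|$ for every $x \in \mathbb{R}^n$. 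Since $\chi_\xi(x)$ is an average of $k$ nonnegative terms, for each $x \neq 0$ some index $i = i(x)$ must satisfy $\min\{\|P_{E_i}x\|, \|P_{E_i^\perp}x\|\} \geq \theta\|x\|$.

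Next I would derive a self-bounding tail inequality. For $X \sim \mu$, the event $\{\|X\| \geq t\}$ is contained in $\bigcup_{i=1}^k \bigl(\{\|P_{E_i}X\| \geq \theta t\} \cap \{\|P_{E_i^\perp}X\| \geq \theta t\}\bigr)$. A union bound, combined with independence of $P_{E_i}X$ and $P_{E_i^\perp}X$ granted by the splitting of $\mu$ along $(E_i, E_i^\perp)$ and the contractivity $\|P_{E_i^\pm}X\| \leq \|X\|$, gives
\begin{align*}
F(t) \leq \sum_{i=1}^k \PP(\|P_{E_i}X\| \geq \theta t) \, \PP(\|P_{E_i^\perp}X\| \geq \theta t) \leq k F(\theta t)^2.
\end{align*}
Because $F(t) \to 0$, pick $t_0$ with $k F(t_0) \leq 1/2$; the identity $\|P_E x\|^2 + \|P_{E^\perp}x\|^2 = \|x\|^2$ forces $\theta \leq 1/\sqrt{2} < 1$, so $t_n := t_0 \theta^{-n}$ increases to infinity. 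Setting $v_n := k F(t_n)$, iteration of the preceding inequality yields $v_{n+1} \leq v_n^2$, hence $v_n \leq 2^{-2^n}$. This doubly-exponential decay along a geometric sequence of radii dominates any polynomial growth, so $\int_0^\infty F(t)\, dt < \infty$ and $\EE\|X\| < \infty$.

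The only non-mechanical ingredient is the compactness step that upgrades the pointwise positivity of $\chi_\xi$ away from the origin to the uniform linear bound $\chi_\xi \geq \theta\|\cdot\|$; once this is in hand, the tail recursion and its doubly-exponential iteration are automatic. I note in passing that the bound obtained is vastly stronger than finiteness of a single moment, which is consistent with (and points toward) the eventual conclusion that $\mu$ is Gaussian.
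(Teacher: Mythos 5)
Your proposal is correct and follows essentially the same route as the paper: a compactness argument yields the uniform lower bound $\max_i \min\{\|P_{E_i}x\|,\|P_{E_i^\perp}x\|\}\geq\theta\|x\|$, and the union bound plus the splitting hypothesis gives the self-majorization $F(t)\leq k\,F(\theta t)^2$. The only difference is cosmetic: where the paper invokes Bryc's Theorem~1.3.5 to convert this self-majorization into stretched-exponential tail decay, you carry out the doubly-exponential iteration $v_{n+1}\leq v_n^2$ by hand, which makes the final step self-contained.
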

\begin{proof}
First,  we claim that there is some $\theta>0$ such that 
$$
\max_{1\leq i\leq k} \min\{\|P_{\tilde{E}_i}x\|, \|P_{\tilde{E}_i^{\perp}}x\|\} \geq \theta \|x\|, ~~\forall x\in \mathbb{R}^n. 
$$
Suppose not.  By homogeneity, there exists a sequence $(u_n)_{n\geq 1}\subset \mathbb{R}^n$ with norm $\|u_n\|=1$ such that 
$$
\min\{\|P_{\tilde{E}_i}u_n\|, \|P_{\tilde{E}_i^{\perp}}u_n\|\} \to 0, ~~~\forall i=1,\dots, k. 
$$
By continuity and compactness, there is a unit vector $u\in \mathbb{R}^n$ such that 
$$
\min\{\|P_{\tilde{E}_i}u\|, \|P_{\tilde{E}_i^{\perp}}u\|\} =0.
$$
Hence $0 \neq u \in \cap_{i=1}^k (\tilde{E}_i \cup \tilde{E}_i^{\perp})$, which is the desired contradiction.   

Therefore,  for $X\sim \mu$ and all $t\geq 0$, 
\begin{align*}
\Pr\{\|X\|\geq t\} &\leq \Pr\left( \cup_{i=1}^k \{ \min\{\|P_{\tilde{E}_i}X\|, \|P_{\tilde{E}_i^{\perp}}X\|\} \geq \theta t\}\right)\\
&\leq \sum_{i=1}^k \Pr\left(  \{\|P_{\tilde{E}_i}X\| \geq \theta t\}\cap\{ \|P_{\tilde{E}_i^{\perp}}X\| \geq \theta t\}\right)\\
&=\sum_{i=1}^k \Pr \{\|P_{\tilde{E}_i}X\| \geq \theta t\} \Pr\{ \|P_{\tilde{E}_i^{\perp}}X\| \geq \theta t\} \\
&\leq k \left( \Pr \{\| X\| \geq \theta t\} \right)^2 .
\end{align*}
The key step is the third line, where we used the assumption that $\mu$ splits along $(\tilde{E}_i,\tilde{E}_i^{\perp})$ for each $i$.   By Bryc, Theorem 1.3.5, this self-majorization property of the complementary distribution function ensures that there are finite $M,\alpha, \beta >0$ (depending only on $k,\theta$) such that
$$
\Pr\{\|X\|\geq t\}  \leq M \exp(- \beta t^{\alpha}), ~~~t\geq 0. 
$$
This guarantees $\EE[\|X\|]<\infty$. 
\end{proof}
\begin{lemma}[{\cite[Proposition~2]{PW}}]\label{lem:linearGrowth}
Let $X$ be a random vector on $\mathbb{R}^n$ with $\EE\|X\|<\infty$, independent from $Z\sim N(0,\id_{\mathbb{R}^n})$.  The  random vector $X+Z$ has a non-vanishing density $f$ which has  derivatives of all orders, and satisfes
$$
\|\nabla \log f(x) \|\leq C(1+\|x\|), ~~~\forall x\in \mathbb{R}^n
$$
for some finite constant $C$ depending only on $\EE\|X\|$. 
\end{lemma}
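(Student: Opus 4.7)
The plan is to prove the lemma in three moves: verify smoothness and non-vanishing of $f$ from the Gaussian convolution structure, rewrite $\nabla\log f$ as a conditional expectation, and then control it by a Markov-type truncation argument.

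First, since $Z\sim N(0,\id_{\mathbb{R}^n})$ is independent of $X$ with smooth strictly positive density $\phi$, the density of $Y:=X+Z$ is the convolution $f(x)=\int\phi(x-y)\,d\mu_X(y)$, where $\mu_X$ is the law of $X$. Because $\phi$ and all its derivatives are Schwartz and $\mu_X$ is a probability measure, dominated convergence lets us differentiate under the integral sign to any order, so $f\in C^{\infty}(\mathbb{R}^n)$; and $f>0$ everywhere since $\phi>0$. Using $\nabla_x\phi(x-y)=-(x-y)\phi(x-y)$, a direct calculation yields
\[
\nabla\log f(x)\;=\;-\,\frac{\int(x-y)\,\phi(x-y)\,d\mu_X(y)}{\int\phi(x-y)\,d\mu_X(y)}\;=\;-\,\EE[\,Z\mid Y=x\,],
\]
since $Z=x-X$ on the event $\{Y=x\}$. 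By Jensen's inequality, it therefore suffices to prove the scalar bound $\EE[\,\|Z\|\mid Y=x\,]\leq C(1+\|x\|)$.

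For a lower bound on $f(x)$, I would set $R:=2\EE\|X\|$ so that Markov's inequality gives $\mu_X(\{\|y\|\leq R\})\geq 1/2$; for such $y$ one has $\|x-y\|\leq\|x\|+R$, hence $f(x)\geq\tfrac{1}{2}(2\pi)^{-n/2}\exp(-(\|x\|+R)^2/2)$. For the numerator, I would split the domain at the cutoff $T:=\|x\|+R$: on $\{\|x-y\|\leq T\}$ estimate $\|x-y\|\leq T$ to produce a contribution of size $T\,f(x)$; on $\{\|x-y\|>T\}$ invoke the Gaussian bound $\phi(x-y)\leq(2\pi)^{-n/2}\exp(-T^2/2)$ together with $\|x-y\|\leq\|x\|+\|y\|$ to produce a contribution of size $(\|x\|+\EE\|X\|)(2\pi)^{-n/2}\exp(-T^2/2)$.

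The decisive point is that when this tail contribution is divided by the lower bound on $f(x)$, the factors $\exp(-T^2/2)$ cancel \emph{exactly}; this forces the specific choice $T=\|x\|+R$ and is the only delicate step. A larger cutoff would make the bulk term $T\,f(x)$ grow too fast, while a smaller one would leave an uncancelled Gaussian factor in the tail estimate. Combining the two pieces then produces a bound of the form $\EE[\,\|Z\|\mid Y=x\,]\leq 3\|x\|+4\EE\|X\|$, yielding the desired linear-growth estimate for $\|\nabla\log f\|$ with constant $C=\max(3,4\EE\|X\|)$ depending only on $\EE\|X\|$, as asserted.
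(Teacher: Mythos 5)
The paper does not prove this lemma at all: it is imported verbatim as \cite[Proposition~2]{PW}, so there is no in-paper argument to compare against. Your proof is correct and self-contained, and it follows the same standard route as the cited reference: the representation $\nabla\log f(x)=-\EE[Z\mid X+Z=x]$ (Tweedie/Miyasawa), a Markov-inequality lower bound $f(x)\geq\tfrac12(2\pi)^{-n/2}e^{-(\|x\|+R)^2/2}$ with $R=2\EE\|X\|$, and a truncation of the numerator at $T=\|x\|+R$ so that the Gaussian factors cancel, giving $\EE[\|Z\|\mid X+Z=x]\leq 3\|x\|+4\EE\|X\|$ with a dimension-free constant depending only on $\EE\|X\|$, as required. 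The only quibble is cosmetic: the cutoff $T=\|x\|+R$ is not forced "exactly" — any linear-in-$\|x\|$ cutoff at least $\|x\|+R$ works equally well — but this does not affect the validity of the argument.
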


\begin{proof}[Proof of Proposition \ref{prop:GaussianCharacterization}]
Since  the splitting property is stable under convolutions with isotropic Gaussians, we can assume $\mu$ is of the form $\mu_0 *\gamma$ for some probability measure $\mu_0\in P(\mathbb{R}^n)$.  By considering characteristic functions, showing $\mu$ is Gaussian establishes the same for $\mu_0$.

Let $f$ denote the density of $\mu$.  By Lemma \ref{lem:linearGrowth} and our regularizing assumption, $f$ is non-vanishing and has derivatives of all orders.  In combination with Lemma \ref{lem:finiteMoments}, we further have the linear growth estimate 
$$
\|\nabla \log f(x) \|\leq C(1+\|x\|), ~~~x\in \mathbb{R}^n
$$
for some $C$, depending only on the moments of $\mu$.   Now, just as in \cite{valdimarsson08}, this justifies taking the Fourier transform of  $s: x\mapsto \nabla \log f(x)$ as a tempered distribution;  we denote this Fourier transform by $\hat{s}$.   By the splitting property, $\hat{s}$ is supported on $(H_i + H_i^{\perp})$ for each $i$, where $H_i$ is the complexification of $\tilde{E}_i$.  Taking intersections over $i=1,\dots, k$, we find that   $\hat{s}$ is supported at the origin, and therefore $s = \nabla \log f$ is polynomial.  By the linear growth condition, we conclude $\nabla \log f$ must be affine, and therefore $\log f$ is quadratic.  Hence, $f$ is a Gaussian density, as desired.  \end{proof}

 We'll need the following lemma, which is a restatement of the classical fact that a bivariate function with vanishing mixed partial derivatives is separable in its variables.   It is taken for granted in \cite[p.~268]{valdimarsson08}, but finding no suitable reference, we opt to give its short proof here. 
\begin{lemma}\label{lem:SplittingVariables}
Let $V$ be a linear subspace of $\mathbb{R}^n$, and let  $\phi: \mathbb{R}^n \to \mathbb{R}$ be twice-differentiable.  The following are equivalent:
\begin{enumerate}[1)]
\item It holds that $\nabla^2 \phi(x) V \subset V$ for every $x\in \mathbb{R}^n$.
\item It holds that
$$
\phi(x) + \phi(0)= \phi(P_V x) + \phi(P_{V^{\perp}} x), ~~\forall x\in \mathbb{R}^n.
$$
\end{enumerate}
\end{lemma}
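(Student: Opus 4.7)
The plan is to show that the Hessian condition is a coordinate-free way of saying that the mixed partial derivatives between $V$ and $V^{\perp}$ directions vanish, and then to integrate this vanishing to obtain separability.

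For the direction 2)$\Rightarrow$1), I would differentiate the identity twice. Fix $u\in V$ and $w\in V^{\perp}$. The right-hand side $\phi(P_V x) + \phi(P_{V^{\perp}}x) - \phi(0)$ has the property that differentiation in direction $u$ yields a function that depends only on $P_V x$; a subsequent differentiation in direction $w$ therefore produces zero. Thus $\langle u,\nabla^2\phi(x)w\rangle = \partial_w\partial_u\phi(x) = 0$ for every $u\in V$, $w\in V^{\perp}$, and $x\in \mathbb{R}^n$. Since $u$ was arbitrary in $V$, this says $\nabla^2\phi(x)w \in V^{\perp}$ for every $w\in V^{\perp}$; by symmetry of the Hessian, equivalently $\nabla^2 \phi(x) V \subset V$.

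For 1)$\Rightarrow$2), define $\psi: V\times V^{\perp}\to \mathbb{R}$ by $\psi(y,z) := \phi(y+z)$, and let $g(y,z) := \psi(y,z) - \psi(y,0) - \psi(0,z) + \psi(0,0)$. I want to show $g\equiv 0$. First, the hypothesis combined with symmetry of the Hessian gives $\langle u,\nabla^2\phi(x)w\rangle = 0$ for all $u\in V$, $w\in V^{\perp}$, and $x\in \mathbb{R}^n$. Equivalently, $\partial_u \partial_w \psi(y,z) = 0$ identically. Hence for each fixed $w\in V^{\perp}$, the directional derivative $y\mapsto \partial_w \psi(y,z)$ has zero gradient along $V$, so $\partial_w\psi(y,z)$ depends only on $z$; call this common value $h_w(z)$. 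Then for any $y\in V$ and $z\in V^{\perp}$, the fundamental theorem of calculus along the segment $t\mapsto tz$ gives
\begin{equation*}
\psi(y,z) - \psi(y,0) = \int_0^1 h_z(tz)\, dt,
\end{equation*}
which is manifestly independent of $y$. Evaluating both sides at $y=0$ shows the common value equals $\psi(0,z)-\psi(0,0)$, so $g(y,z)=0$ for all $y,z$. Translating back via $x = P_Vx + P_{V^{\perp}}x$ gives condition 2).

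The argument is completely elementary; the only point requiring a sliver of care is extracting the vanishing of the mixed second directional derivatives from the coordinate-free statement $\nabla^2\phi(x)V\subset V$, which is why invoking symmetry of the Hessian is essential. Nothing else in the argument demands more than the fundamental theorem of calculus and the hypothesis that $\phi$ is twice-differentiable.
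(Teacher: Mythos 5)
Your proof is correct and follows essentially the same route as the paper's: both arguments reduce the Hessian-invariance condition to the vanishing of the mixed second directional derivatives between $V$ and $V^{\perp}$, and both then integrate this to recover separability (the paper matches gradients via the mean value theorem and concludes the two sides differ by a constant, while you integrate explicitly along a segment via the fundamental theorem of calculus — a cosmetic difference). The paper dismisses the direction 2)$\Rightarrow$1) as trivial, whereas you spell it out; in both treatments the symmetry of the Hessian plays the same essential role.
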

\begin{proof}
The direction $2) \Rightarrow 1)$ is trivial, so we only need to establish the reverse implication.  To that end, decompose $x = x_{\parallel} + x_{\perp}$, where $x_{\parallel} = P_V x$   and $x_{\perp}=P_{V^{\perp}} x$.  Let $v \in V$.  For convenience, we'll write $\langle \cdot, \cdot\rangle$ to denote the usual scalar product on $\mathbb{R}^n$.  For some $t \in (0,1)$ we claim 
\begin{align*}
\langle v ,   \nabla \phi(x) \rangle  
&\equiv \langle v ,   \nabla \phi(x_{\parallel},x_{\perp}) \rangle   \\
&
=  \langle v ,   \nabla \phi(x_{\parallel},0) \rangle   +  
\langle x_{\perp},\nabla  ( \langle v ,   \nabla \phi(x_{\parallel},t x_{\perp}) \rangle  ) \rangle  \\
&=  \langle v ,   \nabla \phi(x_{\parallel},0) \rangle     + \langle x_{\perp} , \nabla^2 \phi(x_{\parallel}, t x_{\perp})  v \rangle\\
&=  \langle v ,   \nabla \phi(x_{\parallel},0) \rangle   \\
&=\langle v , P_V \nabla \phi(P_V x) \rangle.
\end{align*}
Indeed, the  second line holds for some $t\in(0,1)$ by the mean value theorem, the third follows by definition of  the Hessian,   the fourth follows by orthogonality  because $V$ is an invariant subspace of $\nabla^2\phi$, and the last  because $\phi(x_{\parallel},0) \equiv \phi(P_V x)$,  $v\in V$ and $P_V$ is self-adjoint. 
Since $\nabla^2 \phi(x)$ is a symmetric matrix with invariant subspace $V$,  it follows that $V^{\perp}$  is also an invariant subspace of $\nabla^2\phi(x)$.  Hence, by a symmetric argument, we also have
$$
\langle v' ,   \nabla \phi(x) \rangle   = \langle v' , P_{V^{\perp}} \nabla \phi(P_{V^{\perp}}x) \rangle , ~~\forall v'\in V^{\perp}.
$$
Putting things together, this implies for all $v \in V, v'\in V^{\perp}$ that
\begin{align*}
\langle v+ v', \nabla \phi(x) \rangle  &= \langle v+ v', P_V \nabla \phi(P_V x)  +  P_{V^{\perp}} \nabla \phi(P_{V^{\perp}} x) \rangle \\
&=\langle v+ v',   \nabla (\phi\circ P_V)(x)  +  \nabla (\phi\circ P_{V^{\perp}})(x)  \rangle,
\end{align*}
where the second line followed from the chain rule and self-adjointness of the individual projections.   Since $v+v'$ can be taken to be an arbitrary vector in $E$, we must have $\phi   = \phi\circ P_V + \phi\circ P_{V^{\perp}}+C$ for some constant $C$.  Evaluating at $x=0$ reveals $C = - \phi(0)$. 
\end{proof}

\begin{proof}[Proof of Theorem \ref{thm:rigidSplit}]
Since the splitting property is stable under convolutions with isotropic Gaussians, we can assume $\mu$ is of the form $\mu_0 *\gamma$ for some probability measure $\mu_0\in P(\mathbb{R}^n)$.  Showing $\mu$ is of the required form shows the same for the initial data $\mu_0$.

Let $f$ denote the density of $\mu$.  By our regularizing assumption, $f$ is non-vanishing and has derivatives of all orders.  We now follow \cite[Proof of Lemma 13]{valdimarsson08}. Since $\log f$ is separable in the variables $P_{\tilde{E}_i} x$ and $P_{\tilde{E}^{\perp}_i} x$ for each $i$ by the splitting property, it follows from Lemma \ref{lem:SplittingVariables} that 
$$
\nabla^2 \log f(x) \tilde{E}^{\alpha_i}_i \subset \tilde{E}^{\alpha_i}_i, ~~~\forall x\in \mathbb{R}^n, \alpha_i\in\{o,\perp\}, i=1,\dots,k.
$$
Taking intersections, every independent subspace $E_{\alpha}$ satisfies
 $$
\nabla^2 \log f(x) E_{\alpha} \subset E_{\alpha}, ~~~\forall x\in \mathbb{R}^n,
$$
and therefore
 $$
\nabla^2 \log f(x) E_{ind} \subset E_{ind}, ~~~\forall x\in \mathbb{R}^n.
$$
By Lemma \ref{lem:SplittingVariables}, this implies that $\log f$ is of the form
$$
\log f(x) + \log f(0) = \log f(P_{E_{ind}}x) + \log f(P_{E_{dep}}x), ~~~\forall x\in \mathbb{R}^n.
$$
In other words, $\mu$ splits along $(E_{ind},E_{dep})$, and we can write $\mu = \mu_{E_{ind}} \otimes \mu_{E_{dep}}$.
 
A straightforward induction shows that $\mu_{E_{ind}}$ has product form $\otimes_{\alpha} \mu_{E_{\alpha}}$.  It remains to  show $\mu_{E_{dep}}$ is Gaussian.  Toward this end, since $E_{\alpha} \subset \tilde{E}^{\alpha_i}_i$ for each $i$ and suitable choice of ${\alpha_i}\in \{o,\perp\}$, it follows that $E_{dep}$ admits the orthogonal decomposition
$$
E_{dep} = (E_{dep}\cap \tilde{E}_i) \orthplus  (E_{dep}\cap \tilde{E}^{\perp}_i), ~~\mbox{for each $i=1,\dots,k$.}
$$
Hence, by the splitting property of $\mu$, we have that $\mu_{E_{dep}}$   splits along $(E_{dep}\cap \tilde{E}_i, E_{dep}\cap \tilde{E}^{\perp}_i)$ for each $i=1,\dots,k$, and 
$$
\cap_{i=1}^k ( (E_{dep}\cap \tilde{E}_i) \cup  (E_{dep}\cap \tilde{E}^{\perp}_i) ) = E_{dep}\cap \left( \cap_{i=1}^k (   \tilde{E}_i \cup    \tilde{E}^{\perp}_i) \right) = E_{dep}\cap E_{ind} = \{0\}.
$$
An application of Proposition \ref{prop:GaussianCharacterization} completes the proof.
\end{proof}

\subsection{Proof of Theorem \ref{thm:splittingwrtxi} in the general case}

\begin{proof}[Proof of Theorem \ref{thm:splittingwrtxi}] 
2)$\Rightarrow$1): 
If $E_{dep} = \mathbb{R}^n$, then $\mu$ is a Gaussian measure with covariance that splits with respect to $\xi$, and therefore $\mu$ splits along $(E , E^{\perp})$,   $\xi$-a.s. (see the remarks following Definition \ref{def:covSplits}).   So, assume $E_{dep}\subsetneq \mathbb{R}^n$, and let $B_{\alpha} = \{u_1, \dots, u_{\dim(E_{\alpha})}\}$ be a basis for a nonzero independent subspace $E_{\alpha}$.   By definition of $\chi_{\xi}$, for each $i=1,\dots, \dim(E_{\alpha})$, there exists a set $\mathbf{E}_{i,\alpha}\subset \mathbf{E}$ of full $\xi$-measure such that  
$$
u_i \in \cap_{E\in \mathbf{E}_{i,\alpha}}(E \cup E^{\perp}).
$$
Since $\dim(E_{\alpha})$ is finite, we have that $\mathbf{E}_{\alpha}:=\cap_{i=1}^{ \dim(E_{\alpha})}\mathbf{E}_{i,\alpha}$ is another set of full $\xi$-measure, and  $B_{\alpha} \subset \cap_{E\in \mathbf{E}_{\alpha}}(E \cup E^{\perp})$.  This implies
$$
E_{\alpha} \subset \cap_{E\in \mathbf{E}_{\alpha}}(E \cup E^{\perp}).
$$
Since there are at most $n$ distinct nonzero independent subspaces, the set $\mathbf{E}_{ind} := \cap_{\alpha} \mathbf{E}_{\alpha}$ also has  $\xi(\mathbf{E}_{ind})=1$, and has the property that exactly one of the following is true:
\begin{align}
P_E P_{E_{\alpha}} =P_{E_{\alpha}}~(\mbox{and~}P_{E_{\perp}} P_{E_{\alpha}} =0),~\mbox{or}~P_E P_{E_{\alpha}} =0 ~(\mbox{and~}P_{E_{\perp}} P_{E_{\alpha}} =P_{E_{\alpha}}). \label{eq:alternatives}
\end{align}
Finally, since $\mu_{E_{dep}}$ is Gaussian with covariance that splits with respect to $\xi$, there is a set $\mathbf{E}_{dep}\subset \mathbf{E}$ such that $\xi(\mathbf{E}_{dep})=1$ and $\mu_{E_{dep}}\otimes \delta_0$ splits along $(E, E^{\perp})$ for all $E\in \mathbf{E}_{dep}$, where $\delta_0$ is the centered dirac mass on $E_{ind}$.  This follows by Definition \ref{def:covSplits} and the following remarks.  

Therefore, collecting definitions, if $\mu$ admits the product form \eqref{eq:muProductXi}, then it splits along $(E,E^{\perp})$ for every $E\in \mathbf{E}_{ind}\cap \mathbf{E}_{dep}$.  Indeed, this is most easily seen by letting $X\sim \mu$ and writing
$$
P_E X = P_E P_{E_{dep}}X + \sum_{\alpha} P_E P_{E_{\alpha}}X, \mbox{~and~} P_{E^{\perp}} X = P_{E^{\perp}}  P_{E_{dep}}X + \sum_{\alpha} P_{E^{\perp}}  P_{E_{\alpha}}X.
$$
Assuming $E\in \mathbf{E}_{ind}\cap \mathbf{E}_{dep}$,  the Gaussian factors $P_E P_{E_{dep}}X$ and $P_{E^{\perp}} P_{E_{dep}}X$ are independent as just discussed.  Moreover, the factors $P_E P_{E_{\alpha}}X$ and $P_{E^{\perp}} P_{E_{\alpha}}X$ are independent by \eqref{eq:alternatives} for each $\alpha$.  Due to the assumed product form \eqref{eq:muProductXi}, we conclude that $\mu$ splits along $(E,E^{\perp})$ for every $E\in \mathbf{E}_{ind}\cap \mathbf{E}_{dep}$.   Since $\xi(\mathbf{E}_{ind}\cap \mathbf{E}_{dep})=1$, the claim follows. 

\medskip

\noindent 1)$\Rightarrow$2):  We can proceed in the same manner as above to identify a set $\mathbf{E}_{ind}$ of full $\xi$-measure such that 
$$
\cup_{\alpha}E_{\alpha}\subset \cap_{E\in \mathbf{E}_{ind}} (E \cup E^{\perp}).  
$$
Now, by hypothesis, there is a set  $\mathbf{E}_{split}$ of full $\xi$-measure such that $\mu$ splits along $(E,E^{\perp})$ for every $E \in \mathbf{E}_{split}$.   
Since $\mathbf{E}_{ind}\cap \mathbf{E}_{split}$ has full measure, we have 
$$
\chi_{\xi}(x) = \int_{\mathbf{E}_{ind} \cap \mathbf{E}_{split}} \min\{ \|P_E x\|,\|P_{E^{\perp}}x\|\} d\xi(E), ~~x\in \mathbb{R}^n,
$$
so that we conclude  
$$
\cup_{\alpha}E_{\alpha}\subset \cap_{E\in \mathbf{E}_{ind}} (E \cup E^{\perp}) \subset 
\cap_{E\in \mathbf{E}_{ind}\cap \mathbf{E}_{split} } (E \cup E^{\perp})
\subset \chi_{\xi}^{-1}(0).
$$
By virtue of Proposition \ref{prop:ChiZeroSet}, we conclude
\begin{align}
\cup_{\alpha}E_{\alpha} =  \cap_{E\in \mathbf{E}_{ind}\cap \mathbf{E}_{split} }  (E \cup E^{\perp}).\label{eq:Ealph}
\end{align}
Since everything is finite dimensional, there is a finite collection of subspaces $(\tilde{E}_i)_{i=1}^k \subset  \mathbf{E}_{ind}\cap \mathbf{E}_{split}$ such that
\begin{align}
\cup_{\alpha}E_{\alpha} = \cap_{i=1 }^k  (\tilde{E}_i \cup \tilde{E}_i^{\perp}) \label{extractedFinite}
\end{align}
Indeed, we can construct $(\tilde{E}_i)_{i=1}^k$ by a greedy algorithm, where $\tilde{E}_1$ is selected arbitrarily from $\mathbf{E}_{ind}\cap \mathbf{E}_{split}$, and $E_{m+1}$ for $m\geq 1$ is selected from $\mathbf{E}_{ind}\cap \mathbf{E}_{split}$ so that 
$$
\dim(\operatorname{sp}( \cap_{i=1 }^{m+1}  (\tilde{E}_i \cup \tilde{E}_i^{\perp}) )) < \dim(\operatorname{sp}( \cap_{i=1 }^{m}  (\tilde{E}_i \cup \tilde{E}_i^{\perp}) )), 
$$
unless if this is impossible, in which case we terminate the algorithm and set $k=m$. Until termination, this procedure reduces dimension by a positive integer number at each step, and we  always have
$$
\cup_{\alpha}E_{\alpha} \subset \cap_{i=1 }^m  (\tilde{E}_i \cup \tilde{E}_i^{\perp}), ~~\mbox{for each~$m\geq 1$}.
$$
So, in view of \eqref{eq:Ealph}, the procedure must terminate at some point with a collection $(\tilde{E}_i)_{i=1}^k$  satisfying \eqref{extractedFinite}. 

Since $(\tilde{E}_i)_{i=1}^k\subset \mathbf{E}_{split}$ by construction,   $\mu$ splits along $(\tilde{E}_i, \tilde{E}_i^{\perp})$ for each $i=1,\dots, k$.  So, an application of Theorem \ref{thm:rigidSplit} shows that $\mu$ admits the product form \eqref{eq:muProductXi}, with  $\mu_{E_{dep}}$ necessarily being a Gaussian measure.  Since $\mu$ splits along $(E,E^{\perp})$ $\xi$-a.s., and also along $(E_{dep}, E_{dep}^{\perp})$, it follows immediately that $\mu_{E_{dep}}$ has covariance that splits with respect to $\xi$. 

\medskip

\noindent 1) \& 2) $\Rightarrow$3):  This is trivial.   The assumption that $\mu$ splits along $(E, E^{\perp})$, $\xi$-a.s., ensures that \eqref{eq:MuIsMixture0} holds.  Since 2)  guarantees that $\mu_{E_{dep}}$ is Gaussian, it has finite second moments. 

\medskip

\noindent 3)$\Rightarrow$1):  Suppose $\mu$ is of the form \eqref{eq:MuIsMixture0}.  Without loss of generality, we can amplify the assumption that $\mu_{E_{dep}}$ has finite logarithmic moments to the  stronger assumption that $\mu$ has finite logarithmic moments.  This can be accomplished by contracting the measure $\mu$ along the directions in $E_{ind}$ (in fact,  we could assume without loss of generality that $\mu_{E_{ind}}$ is compactly supported).  By definition of the independent subspaces, both the mixture property \eqref{eq:MuIsMixture0} and the splitting property 1) are invariant to such transformations. 

Next, defining $\tilde{\mu} = \mu *   \gamma$, the fact that $\gamma = \gamma_E \otimes \gamma_{E^{\perp}}$ for every $E\in \mathbf{E}$  implies we also have
\begin{align}
\tilde{\mu} = \int_{\mathbf{E}} (\tilde{\mu}_E \otimes  \tilde{\mu}_{E^{\perp}}) d\xi(E). \label{eq:MuIsMixture1}
\end{align}
If $\tilde{\mu}$ splits along some $(E, E^{\perp})$, then so does $\mu$.   Therefore, it suffices to show that $\tilde{\mu}$ splits with respect to $\xi$.  Together with the assumption of finite logarithmic moments, this regularization via convolution implies finiteness of the Shannon entropy $|h(\tilde{\mu})| < \infty$, where Shannon entropy of a measure $\nu \ll \operatorname{Leb}$ is defined as
$$
h({\nu}):= -\int \log\left( \frac{d {\nu}}{dx}\right)  d {\nu} = -\int \frac{d\nu}{dx} \log \left( \frac{d {\nu}}{dx}\right)  d {x},
$$
provided the integral exists in the Lebesgue sense.    With this in hand, we write
\begin{align}
h(\tilde{\mu}) &\geq \int_{\mathbf{E}} h(\tilde{\mu}_E \otimes \tilde{\mu}_{E^{\perp}} ) d\xi(E) \label{eq:convexityOfD}\\
&=\int_{\mathbf{E}} \left( h(\tilde{\mu}_E  )  +   h( \tilde{\mu}_{E^{\perp}} ) \right) d\xi(E) \label{eq:GammaSplits} \\
&\geq   h(\tilde{\mu} ). \label{eq:applyBL}
\end{align}
In the above, \eqref{eq:convexityOfD} is (strict) concavity of the function $t\mapsto -t \log t$ on $t\geq 0$; \eqref{eq:GammaSplits} is because entropy is additive on product measures;   \eqref{eq:applyBL} is subadditivity of entropy.     Thus, we have equality throughout, and we conclude that Jensen's inequality  \eqref{eq:convexityOfD} must hold with equality since $h(\tilde{\mu})$ is finite.  This occurs only if  we have
$$
\frac{d (\tilde{\mu}_E \otimes \tilde{\mu}_{E^{\perp}}) }{dx} = \frac{d \tilde{\mu}  }{dx}~a.e., ~~\xi-a.s.
$$
This implies $\tilde{\mu} = \tilde{\mu}_E \otimes \tilde{\mu}_{E^{\perp}}$, $\xi$-a.s., as desired. \end{proof}

\appendix
 
 \section{Remarks on Question \ref{q:moments}}\label{app:Computations}

The following computation demonstrates simple instances where the answer to Question \ref{q:moments} is affirmative.  However, it is unclear how to extend the argument for this special case to the general setting. As we did in Section \ref{sec:Bernstein}, consider an ambient space $\mathbb{R}^n$ with dimension $n=2m$, and define
$$
E_1 = \{ (x,0) ;  x\in \mathbb{R}^m\}, ~~~E_2 = \{ (x,x); x\in \mathbb{R}^m\},
$$ 
where $0$  denotes the zero-vector in $\mathbb{R}^m$. For $\mu \in P(\mathbb{R}^n)$, define for convenience:
$$
\mu_1 :=\mu_{E_1}, ~~\mu_2 :=\mu_{E^{\perp}_1}, ~~\mu_+ :=\mu_{E_2}, ~~\mu_- :=\mu_{E_2^{\perp}},
$$
which are all probability measures on $\mathbb{R}^m$.  
Now, let $(X_1,X_2)\sim \mu_1\otimes \mu_2$, and $(X_+,X_-)\sim \mu_+\otimes \mu_-$.  If $\mu$ is representable as the mixture 
\begin{align}
\mu = p  \mu_1\otimes \mu_2 + (1-p) \mu_+\otimes \mu_1 \label{eq:simplemixture}
\end{align}
 for some $p\in (0,1)$, then computing the $E_1$ and $E_1^{\perp}$-marginals   from \eqref{eq:simplemixture} reveals 
$$
X_1 \overset{law}= \frac{X_+ + X_-}{\sqrt{2}}, ~~X_2 \overset{law}= \frac{X_+ - X_-}{\sqrt{2}}.
$$
 Similarly, computing the $E_2$ and $E_2^{\perp}$-marginals gives
 $$
X_+ \overset{law}= \frac{X_1 + X_2}{\sqrt{2}}, ~~X_- \overset{law}= \frac{X_1 - X_2}{\sqrt{2}}.
$$
 Thus, if $(X'_1,X_2')$ is an independent copy of $(X_1,X_2)$, we have
 $$
X_1 \overset{law}= \frac{X_1 + X_2 + X_1' - X_2'}{2}, ~~ 
X_2 \overset{law}= \frac{X_1 + X_2 - X_1' + X_2'}{2}
 $$
 Now, let $Z$ be equal in distribution to $X_1 + X_2 - X_1' - X_2'$, and let $(Z_i)_{i=1}^4$ be independent copies of $Z$.  By the above, we have
 $$
 Z \overset{law}= \frac{Z_1 + Z_2 + Z_3 + Z_4}{2}.
 $$
 Hence, $Z$ is a stable law, and noting the scale parameter involved, it must be Gaussian.  By an application of Cram\'er's decomposition theorem, $X_1$ and $X_2$ must be Gaussian.  In particular,  $\mu$ has finite logarithmic moment.   
  
Similar computations work when $E_2 =  \{ (a x, b x); x\in \mathbb{R}^m\}$ for nonzero $a,b\in \mathbb{R}$.  However, the calculations become unwieldy in more general settings, such as if $\mu$ is a nontrivial mixture of three distinct nontrivial products of marginals.

Given that Question \ref{q:moments} only requires us to establish finiteness of logarithmic moments (rather than the  stronger statement that $\mu$ is Gaussian), a more promising   approach may be to show that if $\mu$ admits mixture form \eqref{eq:MuIsMixture0}, then the tail probabilities  $\mu(\{ x : \|x\|^2\geq t\})$ can be nontrivially bounded to conclude finiteness of the logarithmic moment (e.g., as was done in the proof of Lemma \ref{lem:finiteMoments}).  For example, it suffices to show that there are $c,C<1$ and $t_0\geq 0$ such that
\begin{align}
\mu(\{ x : \|x\|^2 > t\}) \leq C \mu(\{ x : \|x\|^2 > c t\}), ~~~\forall t\geq t_0.\label{tailProbEstimate}
\end{align}
Indeed, for $X\sim \mu$, we have $\EE[\log(1+ \|X\|)]<\infty$ if and only if $\EE[\log(1+ \|X\|^2)]<\infty$.  Now, by the tail sum formula and a change of variables, 
$$
\EE[\log(1+ \|X\|^2)]= \int_{0}^{\infty} \frac{\mu(\{ x : \|x\|^2 > t\})}{1+t}dt.
$$
A simple induction shows that $t \mapsto  \frac{1}{1+t}\mu(\{ x : \|x\|^2 > t\})$ is integrable if \eqref{tailProbEstimate} holds.  
For now, however, Question \ref{q:moments} remains unanswered.

\end{document}